\RequirePackage{tikz}
\documentclass{article}

\usepackage{hyperref}

\usepackage{multirow}%
\usepackage{amsmath,amssymb,amsfonts}%
\usepackage{amsthm}%
\usepackage{mathrsfs}%
\usepackage{xcolor}%
\usepackage{textcomp}%
\usepackage{manyfoot}%
\usepackage{booktabs}%
\usepackage{algorithm}%
\usepackage{algpseudocode}%
\usepackage{nicefrac}

\usepackage{pgfplots}
\usepackage{tikz}
\usepackage{mathtools}

\usetikzlibrary{arrows.meta}
\usetikzlibrary{backgrounds}
\usepgfplotslibrary{patchplots}
\usepgfplotslibrary{fillbetween}
\pgfplotsset{%
    layers/standard/.define layer set={%
        background,axis background,axis grid,axis ticks,axis lines,axis tick labels,pre main,main,axis descriptions,axis foreground%
    }{
        grid style={/pgfplots/on layer=axis grid},%
        tick style={/pgfplots/on layer=axis ticks},%
        axis line style={/pgfplots/on layer=axis lines},%
        label style={/pgfplots/on layer=axis descriptions},%
        legend style={/pgfplots/on layer=axis descriptions},%
        title style={/pgfplots/on layer=axis descriptions},%
        colorbar style={/pgfplots/on layer=axis descriptions},%
        ticklabel style={/pgfplots/on layer=axis tick labels},%
        axis background@ style={/pgfplots/on layer=axis background},%
        3d box foreground style={/pgfplots/on layer=axis foreground},%
    },
}

\usepackage{url}

\hypersetup{hidelinks}

\newcommand{\R}{\mathbb{R}}

\newcommand{\PJ}{\cal{P}}

\newtheorem{theorem}{Theorem}
\newtheorem{lemma}[theorem]{Lemma}
\newtheorem{definition}{Definition}
\newtheorem{corollary}[theorem]{Corollary}
\newtheorem{proposition}[theorem]{Proposition}

\newtheorem{remark}{Remark}
\newtheorem{example}{Example}

\newlength{\figurewidth}
\setlength{\figurewidth}{\textwidth}

\pgfplotsset{compat=1.18,width=\figurewidth,height=0.3\figurewidth,every axis/.append style={axis lines=center,ylabel style={at={(ticklabel cs:0.5)},rotate=90,anchor=south},,xlabel style={at={(ticklabel cs:0.5)},anchor=north}}}
\raggedbottom

\newcommand{\fnm}[1]{#1}
\newcommand{\sur}[1]{#1}

\newcommand{\email}[1]{}
\date{}

\newcommand{\keywords}[1]{\par\textbf{Keywords:} #1}
\newcommand{\pacs}[1]{\par\textbf{MSC Classification:} #1}

\newcommand{\backmatter}{}
\newcommand{\bmhead}[1]{\section{#1}}

\title{Extragradient method with feasible inexact projection to variational inequality problem}
\author{
\fnm{R.} \sur{D\'iaz Mill\'an} \email{r.diazmillan@deakin.edu.au}\and 
\fnm{O.}\sur{P. Ferreira}  \email{orizon@ufg.br}\and 
\fnm{J.} \sur{Ugon} \email{ j.ugon@deakin.edu.au}}


\begin{document}
\maketitle
\abstract{The variational inequality problem in finite-dimensional Euclidean space is addressed in this paper, and two inexact variants of the extragradient method are proposed to solve it. Instead of computing exact projections on the constraint set, as in previous versions extragradient method, the proposed methods compute feasible inexact projections on the constraint set using a relative error criterion. The first version of the proposed method provided is a counterpart to the classic form of the extragradient method with constant steps. In order to establish its convergence we need to assume that the operator is pseudo-monotone and Lipschitz continuous, as in the standard approach.   For the second version,  instead of a fixed step size,  the method presented finds a suitable step size in each iteration by performing a line search. Like the classical extragradient method, the proposed method does just two projections into the feasible set in each iteration. A full convergence analysis is provided, with no Lipschitz continuity assumption of the operator defining the variational inequality problem.}

\keywords{Variational inequality problem, Extragradient method, Frank-Wolfe algorithm,  conditional gradient method, feasible inexact projection.}
\pacs{65K05, 90C30, 90C25}


\section{Introduction}
This paper addresses the variational inequality problem in finite-dimensional Euclidean spaces. This problem is formally stated as follows: Let  $F: {\mathbb R}^n\to {\mathbb R}^n$ be an operator and   ${\cal C}\subset {\mathbb R}^n$  be a nonempty and closed convex set, the variational inequality problem associated with $F$ and  $ {\cal C}$ (VIP($F,{\cal C}$)) consists in finding a  $x^*\in {\cal C}$ such that
\begin{equation} \label{eq:mp}
 \left\langle F({x^*}),  x-{x^*} \right\rangle\geq 0 , \qquad \forall~x\in {\cal C}.
\end{equation}
We denote by ${\cal C}^*$ the {\it solution set} of  problem~\ref{eq:mp}, which we will assume to be nonempty.  The variational inequality problem has captured the attention of the mathematics programming community, not only due to its intrinsic interest but also because it serves as an abstract model for several families of problems in nonlinear analysis and their applications.  For instance, if  $F=\nabla f$, where  $f:{\mathbb R}^n\to {\mathbb R}$ is a differentiable function, then the VIP($F,{\cal C}$) corresponds to the problem of minimizing the function $f$ constrained to the set ${\cal C}$.  When ${\cal C}$ is a cone ${\cal K}\subseteq {\mathbb R}^n$, the  VIP($F,{\cal C})$ is a complementary problem, which is stated in the following form: Compute  $x^*\in {\cal K}$ such that, $F(x^*)\in {\cal K}^*$ and $ \left\langle F({x^*}),  {x^*} \right\rangle=0$, where ${\cal K}^*$ denotes the dual of  ${\cal K}$. For a comprehensive study of theory and applications of variational inequality, see  \cite{FacchineiPang2003-II, FacchineiPang2003-I}.

The extragradient method, one of the most popular algorithms to solve the VIP($F,{\cal C}$),  was proposed in \cite{Korpelevivc76} in the 1970s and continues to attract the interest of variational inequality experts, see \cite{doi:10.1080/02331934.2010.539689,doi:10.1137/14097238X,doi:10.1080/02331934.2017.1377199,millanbello}, and the references therein.   This method is attractive due to the fact that it requires only two operator evaluations for each iteration, making it numerically stable and hence potentially suited for addressing large-scale problems.  Apart from the projection required in its definition, which is responsible for nearly all of the computational demands when the constraint set projection is challenging to compute, it is a relatively simple method. Additionally, the method converges with mild assumptions. All these features motivate the study of it, resulting in many different versions of the method throughout the years culminating in a large body of literature on the subject, including  \cite{FacchineiPang2003-I,FacchineiPang2003-II, BelloMillan,wangxiu,buradutta,doi:10.1137/S0363012998339745}, and the references therein.

Another subject that has motivated the development of numerical methods for addressing constrained problems is dealing with the computation of the projection, which is the step that accounts for practically all of the computing demands of methods that utilize projections, like the Extragradient method. In general, computing the projection requires solving a quadratic problem constrained to the feasible set at each iteration, which can significantly raise the cost per iteration if the number of unknowns is large. In light of this, it may not be reasonable to do exact projections when the iterates of the method are distant from the solution of the problem under consideration.  Over the years, various inexact procedures, which become increasingly accurate as the solution of the problem under consideration is approached, have been proposed in an effort to reduce the computational cost required for projections, thus leading to more effective projection-based methods; see for example \cite{BirginMartinezRaydan2003, Bonettini2016, Golbabaee_Davies2018, Gonccalves2020, SalzoVilla2012, VillaSalzo2013, Rasch2020,MillanFerreiraUgon, ReinierOrizonLeandro2019}.

The purpose of this paper is to present two variants of the extragradient method that utilize feasible inexact projections. In the proposed variants of the extragradient method, we will adopt a version of the scheme proposed in \cite[Example 1]{VillaSalzo2013}, where the inexact projection over the feasible set is computed with an appropriate relative error tolerance. Firstly, we present a variation of the extragradient method with constant stepsize and show that it preserves the same convergence result as the classic method, see \cite{FacchineiPang2003-II, Korpelevivc76}. We show that if $F$ is a pseudo monotone operator on $ {\cal C}$ with respect to $ {\cal C}^*$ and Lipschitz continuous, the sequence generated converges to a solution of the VIP($F,{\cal C}$).  It is important to note in this version that the Lipschitz constant is required to compute the stepsize.  Considering that the Lipschitz constant is not accessible or difficult to compute in almost every application, we propose and analyse a feasible inexact projection version of the extragradient method using an Armiljo-type line search.  It is worth noting that, like the classical extragradient method, the method does just two projections into the feasible set in each iteration.  The full convergence of the sequence to a solution is shown, with $F$ being a pseudo monotone operator on ${\cal C}$ with respect to ${\cal C}^*$ and no Lipschitz continuity assumption, which is the same results as the version with exact projection, see \cite{millanbello,buramillan,RePEc,konov,solodsvaiter}.

The organization of the paper is as follows. In section \ref{sec:Preliminares}, we present some notation and basic results used throughout the paper. In  Section~\ref{Sec:InexcProj}   we will revisit the concept of feasible inexact projection onto a closed and convex set and describe some new properties of the feasible inexact projection.   Section~\ref{Sec:ExtraGradmeMethod}  describe and analyze the extragradient method with a feasible inexact projection for solving problem~\eqref{eq:mp}.  In Section~\ref{Se:ExtGradLineSerch} is introduced and analyzed an inexact variant of the extragradient method with line search for solve  VIP($F,{\cal C}$).  Finally, some concluding remarks are made in Section~\ref{Sec:Conclusions}.

\section{{Preliminaries}} \label{sec:Preliminares}
In this section, we present some preliminary results used throughout the paper.  We denote: ${\mathbb{N}}:=\{1,2,3, \ldots\}$, $\langle \cdot,\cdot \rangle$ is the usual inner product and  $\|\cdot\|$ is the Euclidean norm.  Let    ${\cal C}\subset  \mathbb{R}^n $ be closed, convex and nonempty  set, the {\it projection} is   the map    ${\PJ}_{\cal C}:   \mathbb{R}^n \to {\cal C}$   defined by
\begin{equation*}
{\PJ}_{\cal C}(v):=  \arg\min_{y \in {\cal C}}\|v-z\|.
\end{equation*} 
In the next lemma, we present some important properties of the projection mapping. 
\begin{lemma}\label{le:projeccion}
 Given a convex and closed set ${\cal C}\subset \mathbb{R}^n$ and $v\in  \mathbb{R}^n$, the following properties hold:
\begin{enumerate}
\item[(i)]$\langle v-{\PJ}_{\cal C}(v), z-{\PJ}_{\cal C}(v)\rangle\leq0$, for all $z\in {\cal C}$;
\item[(ii)] $\|{\PJ}_{\cal C}(v)-z\|^2\leq \|v-z\|^2- \|{\PJ}_{\cal C}(v)-v\|^2$, for all $z\in {\cal C}$.
\end{enumerate}
\end{lemma}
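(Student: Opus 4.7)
The plan is to prove item (i) as a first-order optimality condition for the defining minimization of $\PJ_{\cal C}(v)$, and then derive item (ii) as an algebraic consequence of (i) via expansion of $\|v-z\|^2$ with the midpoint $\PJ_{\cal C}(v)$. Both facts are classical; the main task is to keep the bookkeeping of inner products straight.

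For item (i), I would fix $z\in{\cal C}$ and, using convexity of ${\cal C}$, consider the segment $y(t):=(1-t)\PJ_{\cal C}(v)+tz\in{\cal C}$ for $t\in[0,1]$. Define $\phi(t):=\|v-y(t)\|^{2}$. Since $\PJ_{\cal C}(v)$ minimizes $\|v-\cdot\|$ (equivalently $\|v-\cdot\|^{2}$) over ${\cal C}$, the function $\phi$ attains its minimum on $[0,1]$ at $t=0$, so $\phi'(0)\ge 0$. A direct computation gives
\begin{equation*}
\phi'(0)=-2\langle v-\PJ_{\cal C}(v),\,z-\PJ_{\cal C}(v)\rangle,
\end{equation*}
and the inequality $\phi'(0)\ge 0$ is exactly (i).

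For item (ii), I would use (i) together with the identity obtained by inserting $\pm\PJ_{\cal C}(v)$ inside $\|v-z\|^{2}$:
\begin{equation*}
\|v-z\|^{2}=\|v-\PJ_{\cal C}(v)\|^{2}+2\langle v-\PJ_{\cal C}(v),\,\PJ_{\cal C}(v)-z\rangle+\|\PJ_{\cal C}(v)-z\|^{2}.
\end{equation*}
By (i) applied to the same $z\in{\cal C}$, the cross term satisfies $\langle v-\PJ_{\cal C}(v),\,\PJ_{\cal C}(v)-z\rangle\ge 0$. Dropping this nonnegative term and rearranging yields $\|\PJ_{\cal C}(v)-z\|^{2}\le\|v-z\|^{2}-\|\PJ_{\cal C}(v)-v\|^{2}$, which is (ii).

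I do not anticipate any real obstacle: the only subtle point is making sure the right-endpoint argument in (i) is sound, which is why I differentiate at $t=0$ rather than appeal to a two-sided stationarity condition (the feasible direction argument works even at a boundary point of ${\cal C}$). Everything else is algebraic manipulation.
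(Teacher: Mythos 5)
Your proof is correct and takes essentially the same route as the paper: for item (ii) you use exactly the same expansion of $\|v-z\|^2$ around ${\PJ}_{\cal C}(v)$ combined with item (i). For item (i) the paper simply cites Bauschke--Combettes, and your feasible-direction argument ($\phi'(0)\ge 0$ at the minimizer) is the standard proof of that cited result, so nothing is genuinely different.
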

\begin{proof}
The item (i)  is proved in  \cite[Theorem 3.14]{BauschkeCombettes20011}. For item (ii),  combine  $ \|v-z\|^2=\|{\PJ}_{\cal C}(v)-v\|^2+\|{\PJ}_{\cal C}(v)-z\|^2-2\langle {\PJ}_{\cal C}(v)-v, {\PJ}_{\cal C}(v)-z\rangle$ with  item (i). 
\end{proof}
For the formula in the next proposition see, for example,  \cite[Example 3.21]{BauschkeCombettes20011}.
\begin{proposition}\label{prop:0}
Let  $a, v \in  \mathbb{R}^n$ and $H=\{x\in  \mathbb{R}^n:~\langle v, x- a\rangle \leq 0 \}$. If  ${\bar x} \notin H$,  then
$$
{\PJ}_{H}({\bar x})= {\bar x}-\frac{1}{\Vert v\Vert ^{2}}\langle v, {\bar x}- a \rangle v.
$$
\end {proposition}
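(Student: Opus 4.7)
My plan is to verify the formula directly by applying the variational characterization of the projection given in Lemma~\ref{le:projeccion}(i). Specifically, I will set
\[
y := \bar{x} - \frac{1}{\|v\|^{2}}\langle v, \bar{x}-a\rangle v,
\]
and show both that $y\in H$ and that $\langle \bar{x}-y, z-y\rangle \le 0$ for every $z\in H$; by Lemma~\ref{le:projeccion}(i) this forces $y=\mathcal{P}_{H}(\bar{x})$.

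Note that since $\bar{x}\notin H$, we have $\langle v, \bar{x}-a\rangle > 0$, and in particular $v\neq 0$, so the formula is well defined. The first step is to compute
\[
\langle v, y-a\rangle \;=\; \langle v, \bar{x}-a\rangle - \frac{\langle v,\bar{x}-a\rangle}{\|v\|^{2}}\,\|v\|^{2} \;=\; 0,
\]
which shows $y$ lies on the bounding hyperplane of $H$, hence in $H$.

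The second step uses the fact that $\bar{x}-y = \tfrac{1}{\|v\|^{2}}\langle v,\bar{x}-a\rangle\, v$ is a positive scalar multiple of $v$. Thus for any $z\in H$,
\[
\langle \bar{x}-y,\, z-y\rangle \;=\; \frac{\langle v,\bar{x}-a\rangle}{\|v\|^{2}}\,\langle v, z-y\rangle,
\]
and the factor $\langle v,z-y\rangle = \langle v, z-a\rangle - \langle v, y-a\rangle = \langle v, z-a\rangle$ is $\le 0$ by the definition of $H$. Hence $\langle \bar{x}-y, z-y\rangle \le 0$ for all $z\in H$.

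There is no genuine obstacle here: the proof is essentially a bookkeeping check that the candidate $y$ satisfies the two defining properties of the metric projection onto a closed convex set. The only sliver of care is to observe that the hypothesis $\bar{x}\notin H$ rules out the degenerate $v=0$ case and guarantees the displayed formula is meaningful; the inequality $\langle v,z-y\rangle\le 0$ is then immediate from $y$ lying on the boundary hyperplane of $H$.
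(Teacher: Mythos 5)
Your verification is correct. Note that the paper does not actually prove this proposition; it simply cites \cite[Example~3.21]{BauschkeCombettes20011}, so you are supplying the standard self-contained argument: exhibit the candidate $y$, check $y\in H$ (indeed $\langle v, y-a\rangle=0$, so $y$ lies on the bounding hyperplane), and check the obtuse-angle inequality $\langle \bar{x}-y, z-y\rangle\le 0$ for all $z\in H$, using that $\bar{x}-y$ is a positive multiple of $v$. All of these computations are right, and your observation that $\bar{x}\notin H$ forces $\langle v,\bar{x}-a\rangle>0$ and $v\neq 0$ is exactly the point that makes the formula well defined. The one place you should be slightly more careful is the final inference: Lemma~\ref{le:projeccion}(i) as stated only says that the projection \emph{satisfies} the variational inequality, whereas you are invoking the converse (that a point of $H$ satisfying the inequality \emph{is} the projection). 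That converse is standard and follows in one line from the lemma itself: if both $y$ and ${\PJ}_{H}(\bar{x})$ satisfy the inequality, testing each against the other and adding gives $\|y-{\PJ}_{H}(\bar{x})\|^{2}\le 0$, hence $y={\PJ}_{H}(\bar{x})$. Adding that sentence would make the proof airtight; as written it is a minor imprecision rather than a genuine gap.
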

Let  $F: {\mathbb R}^n\to {\mathbb R}^n$ be an operator, ${\cal C}\subset {\mathbb R}^n$  be a nonempty and closed convex set.  The operator  $F$ is said to be {\it pseudo-monotone on  ${\cal C}$ with respect to the solution set  ${\cal C}^*$} of  problem~\eqref{eq:mp} if the set  ${\cal C}^*$ is  nonempty and,  for every  $x^*\in {\cal C}^*$,  there holds:
\begin{equation*} 
\left\langle F(x),  x-{x^*} \right\rangle\geq 0, \qquad \forall x\in {\cal C}.
\end{equation*}
It is worth noting that the concept described above represents a relaxation of the pseudo-monotonicity of $F$ on ${\cal C}$.  The concept of {\it pseudo-monotonicity of $F$ on ${\cal C}$} is defined as follows: for all $x, y \in {\cal C}$, one hast that  $\left\langle F(y), x - y \right\rangle\geq 0$ implies  $\left\langle F(x), x - y \right\rangle \geq 0$.

\begin{definition}\label{def:fejer}
Let $S$ be a nonempty subset of $\mathbb{R}^n$. A sequence $( v_k)_{k\in\mathbb{N}}\subset \mathbb{R}^n$ is said to be quasi-Fej\'er convergent to $S$, if and only if, for all $v\in S$ there exists ${\bar k}\ge 0$ and a summable sequence $(\epsilon_k)_{k\in\mathbb{N}}$, such that $\|v_{k+1}-v\|^2 \le \| v_k - v\|^2+\epsilon_k$ for all $k\ge {\bar k}$. 
\end{definition}
In the following lemma, we state the main properties of quasi-Fej\'er sequences that we will need; a comprehensive study on this topic can be found in \cite{Combettes2001}.
\begin{lemma}\label{le:fejer}
Let $S$ be a nonempty subset of $\mathbb{R}^n$ and   $(v_k)_{k\in\mathbb{N}}$  be  a quasi-Fej\'er sequence convergent to $S$.  Then, the following conditions hold:
\begin{enumerate}
\item[(i)] the sequence $(v_k)_{k\in\mathbb{N}}$ is bounded;
\item[(ii)] if a cluster point ${\bar v}$ of $(v_k)_{k\in\mathbb{N}}$  belongs to $S$, then  $(v_k)_{k\in\mathbb{N}}$ converges to ${\bar v}$.
\end{enumerate}
\end{lemma}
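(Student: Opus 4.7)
The plan is to exploit the quasi-Fej\'er inequality applied at a single point $v\in S$ and combine it with the summability of $(\epsilon_k)$.

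For item (i), I would fix an arbitrary $v\in S$, obtain the associated index $\bar k$ and summable error sequence $(\epsilon_k)$, and then iterate the defining inequality: for every $k\ge \bar k$,
\begin{equation*}
\|v_{k}-v\|^2\le \|v_{\bar k}-v\|^2+\sum_{j=\bar k}^{k-1}\epsilon_j\le \|v_{\bar k}-v\|^2+\sum_{j=\bar k}^{\infty}\epsilon_j.
\end{equation*}
The right-hand side is a finite constant since $(\epsilon_k)$ is summable, so $(\|v_k-v\|)_{k\ge \bar k}$ is bounded; combined with the finitely many initial terms, the whole sequence $(v_k)$ is bounded.

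For item (ii), I would apply the quasi-Fej\'er property specifically at the cluster point $\bar v\in S$ (this is where we use $\bar v\in S$). Setting $a_k:=\|v_k-\bar v\|^2$ and $\sigma_k:=\sum_{j=k}^{\infty}\epsilon_j$, the inequality $a_{k+1}\le a_k+\epsilon_k$ is equivalent to $a_{k+1}+\sigma_{k+1}\le a_k+\sigma_k$ for $k\ge \bar k$. Thus $(a_k+\sigma_k)_{k\ge \bar k}$ is a nonnegative, nonincreasing real sequence, hence convergent. Since $\sigma_k\to 0$ by summability, $(a_k)$ is itself convergent to some $\ell\ge 0$.

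The final step, and the only really substantive one, is to identify the limit. Because $\bar v$ is a cluster point of $(v_k)$, there is a subsequence $(v_{k_l})$ with $v_{k_l}\to \bar v$, so $a_{k_l}\to 0$. Since the whole sequence $(a_k)$ converges, its limit must equal the limit of any subsequence, yielding $\ell=0$, i.e.\ $v_k\to \bar v$. I do not anticipate a serious obstacle: the only conceptual ingredient is the ``telescoping trick'' $a_k\mapsto a_k+\sigma_k$ that turns a quasi-monotone inequality into a genuinely monotone one, after which the argument is routine.
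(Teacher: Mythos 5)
Your proof is correct, and the paper itself offers no proof of this lemma --- it simply defers to the reference \cite{Combettes2001}, where essentially your argument (iterate the defining inequality for boundedness; monotonize $a_k=\|v_k-\bar v\|^2$ via $a_k+\sigma_k$ with $\sigma_k=\sum_{j\ge k}\epsilon_j$ to get convergence of $a_k$, then identify the limit as $0$ along the subsequence converging to $\bar v$) is the standard one. The only point worth flagging is that both your bound $\sum_{j=\bar k}^{k-1}\epsilon_j\le\sum_{j=\bar k}^{\infty}\epsilon_j$ and the nonnegativity of $a_k+\sigma_k$ implicitly use $\epsilon_k\ge 0$, which is part of the usual definition of quasi-Fej\'er convergence even though the paper's Definition~\ref{def:fejer} does not state it explicitly.
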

\section{Feasible inexact projection} \label{Sec:InexcProj}
In this section, we will revisit the concept of feasible inexact projection onto a closed and convex set. This concept has already been utilized in \cite{AdOriLea2020,OriFabaGil2018,ReinierOrizonLeandro2019, MillanFerreiraUgon}. We also describe some new properties of the feasible inexact projection, which is employed throughout the work. The definition of feasible inexact projection is as follows.

\begin{definition} \label{def:InexactProj}
	Let ${\cal C}\subset {\mathbb R}^n$ be a closed convex set and $\gamma \in {\mathbb R}_{+}$ is a given error tolerance. 
 The {\it feasible inexact projection mapping} relative to $u \in {\cal C}$ with error tolerance  ${\gamma}$, denoted by ${\cal P}^{\gamma}_{\cal C}(u, \cdot): {\mathbb R}^n \rightrightarrows {\cal C}$, is the set-valued mapping defined as follows
\begin{equation} \label{eq:Projw} 
	{\cal P}^{\gamma}_{\cal C}(u, v) := \left\{w\in {\cal C}:~\big\langle v-w, y-w \big\rangle \leq  \gamma \|w-u\|^2,~\forall y \in {\cal C} \right\}.
\end{equation}
Each point $w\in {\cal P}^{\gamma}_{\cal C}( u, v)$ is called a {\it feasible inexact projection of $v$ onto ${\cal C}$ relative to $u$  with error tolerance $\varphi_{\gamma}$}.
\end{definition}

The feasible inexact projection generalizes the concept of usual projection. In the following,  we present some remarks about this concept. 
\begin{remark}\label{rem: welldef}
Let $\gamma \in {\mathbb R}_{+}$ be  error tolerance, ${\cal C}\subset {\mathbb R}^n$, $u\in {\cal C}$ and ${\gamma}$ be as in Definition~\ref{def:InexactProj}.  For all $v\in {\mathbb R}^n$, it follows from \eqref{eq:Projw} that ${\cal P}^0_{\cal C}(u, v)$ is the exact projection of $v$ onto ${\cal C}$; see \cite[Proposition~2.1.3, p. 201]{Bertsekas1999}. Moreover, ${\cal P}^0_{\cal C}( u, v) \subset {\cal P}^{\gamma}_{\cal C}( u, v)$  which implies that  ${\cal P}^{\gamma}_{\cal C}( u, v)\neq \varnothing$, for all $u\in {\cal C}$ and $v\in {\mathbb R}^n$. In general, if $\gamma\leq {\bar \gamma}$ then ${\cal P}^{\gamma}_{\cal C}( u, v) \subset {\cal P}^{\bar \gamma}_{\cal C}( u, v)$.
\end{remark}

Below we present a particular counterpart of the firm non-expansiveness of the projection operator to a feasible inexact projection operator, its proof follows the same idea of  \cite{ReinierOrizonLeandro2019}.
\begin{proposition} \label{pr:snonexp}
  Let $v\in {\mathbb R}^n$ and   $ \gamma\geq 0$. If  ${w}\in {\PJ}^{\gamma}_C({u},v)$ and  ${\bar w}= {\PJ}_C({\bar v})$, then  
 \begin{equation*}
 \|{w}-{\bar w}\|^2\leq \|{v}-{\bar v}\|^2- \|({v}-{\bar v})-({w}-{\bar w})\|^2  +2\gamma\|{w}-{u}\|^2.
\end{equation*} 
\end{proposition}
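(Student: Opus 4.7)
The plan is to derive the desired inequality by combining the defining characterization of the feasible inexact projection $w$ with the variational characterization (Lemma~\ref{le:projeccion}(i)) of the exact projection $\bar w$, and then expanding one squared norm using the polarization identity.

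First, I would apply Definition~\ref{def:InexactProj} to $w\in {\cal P}^{\gamma}_{\cal C}(u, v)$ by taking the test point $y=\bar w \in {\cal C}$, producing
\begin{equation*}
\langle v - w,\, \bar w - w \rangle \leq \gamma\, \|w - u\|^2.
\end{equation*}
Next, since $\bar w = {\cal P}_{\cal C}(\bar v)$ is the exact projection, Lemma~\ref{le:projeccion}(i) applied at the test point $z = w \in {\cal C}$ gives
\begin{equation*}
\langle \bar v - \bar w,\, w - \bar w \rangle \leq 0.
\end{equation*}
Adding these two inequalities and rearranging yields
\begin{equation*}
\langle (v-\bar v) - (w-\bar w),\, w - \bar w \rangle \geq -\gamma\,\|w-u\|^2,
\end{equation*}
or equivalently
\begin{equation*}
\|w-\bar w\|^2 \leq \langle v-\bar v,\, w-\bar w \rangle + \gamma\,\|w-u\|^2.
\end{equation*}

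The final step is purely algebraic: apply the identity
\begin{equation*}
\|(v-\bar v) - (w-\bar w)\|^2 = \|v-\bar v\|^2 - 2\langle v-\bar v,\, w-\bar w\rangle + \|w-\bar w\|^2
\end{equation*}
to eliminate the inner product. Substituting $2\langle v-\bar v, w-\bar w\rangle = \|v-\bar v\|^2 + \|w-\bar w\|^2 - \|(v-\bar v)-(w-\bar w)\|^2$ into the previous inequality (doubled) and simplifying, the cross-term disappears and we obtain exactly
\begin{equation*}
\|w-\bar w\|^2 \leq \|v-\bar v\|^2 - \|(v-\bar v)-(w-\bar w)\|^2 + 2\gamma\,\|w-u\|^2.
\end{equation*}

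I do not anticipate a real obstacle here: the only non-obvious step is recognising that the factor of $2$ on $\gamma\|w-u\|^2$ appears precisely because doubling the raw inequality $\|w-\bar w\|^2 \leq \langle v-\bar v, w-\bar w\rangle + \gamma\|w-u\|^2$ is what allows the polarization identity to absorb both copies of $\|w-\bar w\|^2$. This mirrors the standard proof of firm non-expansiveness of ${\cal P}_{\cal C}$, with the single modification being the slack term $\gamma\|w-u\|^2$ inherited from Definition~\ref{def:InexactProj}.
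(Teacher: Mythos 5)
Your proof is correct and follows essentially the same route as the paper's: both combine the defining inequality of ${\cal P}^{\gamma}_{\cal C}(u,v)$ tested at $\bar w$ with Lemma~\ref{le:projeccion}(i) tested at $w$, add the two, and then eliminate the cross term $\langle v-\bar v, w-\bar w\rangle$ via the expansion of $\|(v-\bar v)-(w-\bar w)\|^2$. The algebra, including the origin of the factor $2$ on $\gamma\|w-u\|^2$, checks out.
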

\begin{proof}
Since   ${w}\in {\PJ}^{\gamma}_C({u},v)$   and  ${\bar w}= {\PJ}_C({\bar v})$,  it follows from \eqref{eq:Projw} and Lemma~\ref{le:projeccion} that
$$
\big\langle v-w, {\bar w}-w \big\rangle \leq  \gamma \|w-u\|^2, \qquad \big\langle {\bar v}-{\bar w}, w-{\bar w} \big\rangle \leq  0
$$
By adding the last two inequalities,   some algebraic manipulations yield 
\begin{equation*}
-\big\langle {\bar v}-{v}, {\bar w}-w \big\rangle +  \|{w}-{\bar w}\|^2 \leq   \gamma \|w-u\|^2.
\end{equation*}
Since  $\|({\bar v}-{v})- ({\bar w}-w)\|^2=\|{\bar v}-{v}\|^2-2\big\langle {\bar v}-{v}, {\bar w}-w \big\rangle  +\|{\bar w}-{w}\|^2$, the desired inequality follows by combination with the last inequality.
\end{proof}

The forthcoming lemma delineates essential properties of the feasible inexact projection operator, specifically crafted for examining the extragradient method. These properties will play a crucial role in deriving features associated with the progress of this  algorithm, as well as in verifying optimality.

\begin{lemma} \label{Le:ProjProperty}
	Let  $F: {\mathbb R}^n\to {\mathbb R}^n$ be an   operator, $ {\cal C}\subset {\mathbb R}^n$   be a nonempty, closed, and convex set, $x \in {\cal C}$ and  $0\leq \gamma  < 1$. Take  $z\in  {\mathbb R}^n$ and any inexact projection 
	$$
	w(\alpha)  \in {\cal P}^{\gamma}_{\cal C}(x, x-\alpha F(z)), \qquad \alpha  \in(0, +\infty).
	$$
	Then, there hold:
	\begin{enumerate}
		\item[(i)] $\big\langle F(z), w(\alpha) - x\big\rangle \leq \dfrac{\gamma-1}{\alpha}\|w(\alpha)-x\|^2$;
		\item[(ii)] $\|w(\alpha)-x\|\leq\dfrac{\alpha}{1-\gamma}\|F(z)\|$. 
	\end{enumerate}
\end{lemma}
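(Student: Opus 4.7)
The plan is to unpack the definition of feasible inexact projection in \eqref{eq:Projw} with the specific choice of $v = x - \alpha F(z)$ and $u = x$, using the admissible test point $y = x \in {\cal C}$ to derive (i), and then combining (i) with the Cauchy--Schwarz inequality to derive (ii).

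For item (i), I would start from
$$\big\langle (x-\alpha F(z)) - w(\alpha), y - w(\alpha)\big\rangle \leq \gamma \|w(\alpha)-x\|^2, \qquad \forall y \in {\cal C},$$
which is valid by Definition~\ref{def:InexactProj}. Since $x \in {\cal C}$, I may set $y = x$, yielding
$$\big\langle x - w(\alpha), x - w(\alpha)\big\rangle - \alpha\big\langle F(z), x - w(\alpha)\big\rangle \leq \gamma \|w(\alpha)-x\|^2.$$
Recognizing the first term as $\|w(\alpha)-x\|^2$ and rearranging gives
$$\alpha\big\langle F(z), w(\alpha) - x\big\rangle \leq (\gamma - 1)\|w(\alpha)-x\|^2,$$
and dividing by $\alpha > 0$ yields the desired inequality.

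For item (ii), I would use (i) together with Cauchy--Schwarz. Since $0 \leq \gamma < 1$, multiplying the inequality in (i) by $-1$ gives
$$\frac{1-\gamma}{\alpha}\|w(\alpha)-x\|^2 \leq -\big\langle F(z), w(\alpha) - x\big\rangle \leq \|F(z)\|\,\|w(\alpha)-x\|.$$
If $w(\alpha) = x$ the bound is trivial; otherwise I divide through by $\|w(\alpha)-x\| > 0$ and rearrange to obtain $\|w(\alpha)-x\| \leq \frac{\alpha}{1-\gamma}\|F(z)\|$.

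There is no real obstacle here: the proof is essentially a direct reading of the inexact projection inequality at the canonical test point $y = x$, followed by a one-line Cauchy--Schwarz argument. The only detail that deserves care is ensuring $\alpha > 0$ and $1-\gamma > 0$ so the sign of the manipulations is preserved, both of which are guaranteed by hypothesis.
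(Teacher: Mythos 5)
Your proof is correct and follows exactly the paper's argument: both instantiate the defining inequality of ${\cal P}^{\gamma}_{\cal C}(x, x-\alpha F(z))$ at the test point $y=x$, rearrange to get item (i), and then combine the resulting nonnegative bound with the Cauchy--Schwarz inequality (treating the case $w(\alpha)=x$ trivially) to get item (ii). No differences worth noting.
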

\begin{proof}
	Since $w(\alpha)  \in {\cal P}^{\gamma}_{\cal C}(x, x-\alpha F(z))$ we obtain $\big\langle x-\alpha F(z)-w(\alpha), x-w(\alpha) \big\rangle \leq \gamma \|w(\alpha)-x\|^2$,  which after some algebraic manipulations yields
$$
 \|w(\alpha)-x\|^2-\alpha \big\langle  F(z), x-w(\alpha) \big\rangle \leq \gamma \|w(\alpha)-x\|^2.
$$
Thus, item $(i)$ follows from the last inequality.  We proceed to prove the tem $(ii)$. For that, first note that  the item $(i)$ is equivalent to 
\begin{equation} \label{eq:aitm1}
 0\leq \dfrac{1-{\gamma}}{\alpha}\|w(\alpha)-x\|^2\leq  \big\langle F(z),   x-w(\alpha)\big\rangle.
\end{equation} 
 If $w(\alpha)=x$, then the inequality holds trivially.  Assume that $w(\alpha)\neq x$. Thus, the inequality  in item (ii) follows by combining the inequality \eqref{eq:aitm1} with 
 $\langle F(z),   x-w(\alpha)\rangle\leq \|F(z)\|\|x-w(\alpha)\|$.
\end{proof}
The subsequent corollary, derived from the preceding lemma, plays a significant role in verifying the optimality of points generated by the extragradient method.
\begin{corollary} \label{cr:equival}
  The following statements are equivalent:
  \begin{enumerate}
  \item[(i)] $x$ is a solution of the VIP(F,${\cal C}$);
  \item[(ii)] $x\in {\cal P}^{\gamma}_{\cal C}(x,x-\alpha F(x))$, for all $\alpha  \in(0, +\infty)$;
  \item[(iii)]  there exists ${\bar \alpha}>0$ such that $\langle F(x),w({\bar \alpha})-x\rangle\geq 0$ for $w({\bar \alpha})\in {\cal P}^{\gamma}_{\cal C}(x,x-{\bar \alpha} F(x))$.
  \end{enumerate}
\end{corollary}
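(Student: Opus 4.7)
My plan is to prove the cycle (i)$\Rightarrow$(ii)$\Rightarrow$(iii)$\Rightarrow$(i). The first two implications are essentially unfoldings of definitions; the only non-trivial step is (iii)$\Rightarrow$(i), where I will need to exploit the strict inequality $\gamma<1$ via Lemma~\ref{Le:ProjProperty}(i) to force the inexact projection to coincide with $x$.

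For (i)$\Rightarrow$(ii), I would test the defining inequality of $\mathcal{P}^{\gamma}_{\mathcal{C}}(x,x-\alpha F(x))$ in Definition~\ref{def:InexactProj} against the candidate $w=x$: the condition reduces to $\langle -\alpha F(x), y-x\rangle \leq \gamma\|x-x\|^{2}=0$ for every $y\in\mathcal{C}$, which is exactly the VIP inequality after dividing by $\alpha>0$. Hence $x\in\mathcal{P}^{\gamma}_{\mathcal{C}}(x,x-\alpha F(x))$ for every $\alpha>0$.

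For (ii)$\Rightarrow$(iii), I would simply pick any $\bar\alpha>0$; assumption (ii) then guarantees that $x$ itself is an admissible choice of $w(\bar\alpha)$, and with this choice $\langle F(x), w(\bar\alpha)-x\rangle=0\geq 0$.

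The key step is (iii)$\Rightarrow$(i). Here I would apply Lemma~\ref{Le:ProjProperty}(i) with $z=x$ and $\alpha=\bar\alpha$, which yields
\[
 \langle F(x), w(\bar\alpha)-x\rangle \;\leq\; \frac{\gamma-1}{\bar\alpha}\,\|w(\bar\alpha)-x\|^{2}.
\]
Since $\gamma<1$ and $\bar\alpha>0$, the right-hand side is non-positive, while the hypothesis in (iii) provides the reverse inequality; combining these two bounds forces $\|w(\bar\alpha)-x\|^{2}=0$, that is, $w(\bar\alpha)=x$. Consequently $x\in\mathcal{P}^{\gamma}_{\mathcal{C}}(x,x-\bar\alpha F(x))$, and unfolding Definition~\ref{def:InexactProj} exactly as in the (i)$\Rightarrow$(ii) step delivers $\langle F(x), y-x\rangle\geq 0$ for every $y\in\mathcal{C}$, i.e., (i). The only subtlety worth flagging is that the requirement $\gamma<1$ in the hypothesis of Lemma~\ref{Le:ProjProperty} is essential in this last argument; without it, the coefficient $\gamma-1$ could vanish and the forcing $w(\bar\alpha)=x$ would fail.
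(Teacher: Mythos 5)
Your proof is correct and rests on the same two ingredients as the paper's: the observation that $x\in {\cal P}^{\gamma}_{\cal C}(x,x-\alpha F(x))$ unfolds, via Definition~\ref{def:InexactProj} with $w=u=x$, into exactly the variational inequality $\langle F(x),y-x\rangle\ge 0$ for all $y\in{\cal C}$, and Lemma~\ref{Le:ProjProperty}(i) with $z=x$, whose right-hand side is nonpositive because $\gamma<1$, to force an inexact projection to coincide with $x$. The organization differs slightly: you run a single cycle and handle (iii)$\Rightarrow$(i) directly, whereas the paper proves (i)$\Leftrightarrow$(ii) and (ii)$\Leftrightarrow$(iii) separately and argues (iii)$\Rightarrow$(ii) by contraposition; your (i)$\Rightarrow$(ii) is also more economical, since you verify the defining inequality for the candidate $w=x$ directly rather than showing, as the paper does, that every member of ${\cal P}^{\gamma}_{\cal C}(x,x-\alpha F(x))$ must equal $x$. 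One small point to tighten: in (ii)$\Rightarrow$(iii) you exhibit only the particular choice $w(\bar\alpha)=x$, while the paper reads item (iii) as asserting the inequality for \emph{every} $w(\bar\alpha)$ in the (generally non-singleton) set ${\cal P}^{\gamma}_{\cal C}(x,x-\bar\alpha F(x))$. The repair is immediate and uses nothing new: the definitional unfolding you already invoke shows that (ii) yields $\langle F(x),y-x\rangle\ge 0$ for all $y\in{\cal C}$, and every $w(\bar\alpha)$ lies in ${\cal C}$.
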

\begin{proof}
Proof of equivalence between item $(i)$ and item $(ii)$: We first assume that item $(i)$ holds, i.e., $x$ is a solution for problem \eqref{eq:mp}. In this case, by taking $w(\alpha) \in {\cal P}^{\gamma}_{\cal C}(x, x-\alpha F(x))$, we find that $w(\alpha) \in {\cal C}$. Consequently, we have $\big\langle F(x), w(\alpha)-x \big\rangle \geq 0$. Considering that $\alpha > 0$ and $0 \leq \gamma < 1$, the last inequality, along with item $(i)$ of Lemma~\ref{Le:ProjProperty} for $z=x$, implies that $w(\alpha) = x$. Hence, $x \in {\cal P}^{\gamma}_{\cal C}(x, x-\alpha F(x))$, and item $(ii)$ also holds. Reciprocally, assuming that item $(ii)$ holds, if $x \in {\cal P}^{\gamma}_{\cal C}(x, x-\alpha F(x))$, then applying \eqref{eq:Projw} with $w = x$, $v = x-\alpha F(x)$, and $u = x$ yields $\big\langle x-\alpha F(x)-x, y-x \big\rangle \leq 0$, for all $y \in {\cal C}$. Given that $\alpha > 0$, the last inequality is equivalent to $\big\langle F(x), y-x \big\rangle \geq 0$, for all $y \in {\cal C}$. Thus, $x$ is a solution for problem \eqref{eq:mp}, and item $(i)$ holds as well.

Proof of equivalence between item $(ii)$ and item $(iii)$: Let us  assume that item $(ii)$ holds. Thus, item $(i)$ also holds, and $x$ is a solution for problem \eqref{eq:mp}, which implies that $\left\langle F(x), y-x \right\rangle \geq 0$ for all $y\in {\cal C}$. Considering that for any $w({\bar \alpha})\in {\cal P}^{\gamma}_{\cal C}(x,x-{\bar \alpha} F(x))$, we have $w({\bar \alpha})\in {\cal C}$, it follows that $\langle F(x),w({\bar \alpha})-x\rangle\geq 0$, and item $(iii)$ holds. Conversely, we assume, for contradiction, that item $(ii)$ does not hold. Therefore, $x \notin {\cal P}^{\gamma}_{\cal C}(x, x-\alpha F(x))$, and considering that $w(\alpha) \in {\cal P}^{\gamma}_{\cal C}(x, x-\alpha F(x))$, we conclude that $x \neq w(\alpha)$. As a result, because $\alpha > 0$ and $0 < \gamma \leq \bar{\gamma}$, it follows from item $(i)$ of Lemma~\ref{Le:ProjProperty} that $\big\langle F(x), w(\alpha) - x \big\rangle < 0$, for all $\alpha \in (0, +\infty)$. Thus, item~$(iii)$ does not hold, which leads to a contradiction. Therefore, $(iii)$ implies $(ii)$.
\end{proof}

This section concludes with the presentation of a consequence derived from Lemma~\ref{Le:ProjProperty}, providing a valuable tool for verifying the progress of the extragradient method.
\begin{corollary} \label{cr:ProjPropertyii}
Let  $F: {\mathbb R}^n\to {\mathbb R}^n$ be a   operator, $ {\cal C}\subset {\mathbb R}^n$   be a nonempty, closed, and convex set, $x \in {\cal C}$, $0\leq \gamma  < 1$ and $\alpha>0$.   If $y  \in {\cal P}^{\gamma}_{\cal C}(x, x-\alpha F(x))$ and  $x^{+}  \in {\cal P}^{\gamma}_{\cal C}(x, x-\alpha F(y))$, then the following inequalities   hold:
\begin{enumerate}
		\item[(i)] $\|y-x\|\leq\dfrac{\alpha}{1-\gamma}\|F(x)\|$;
		\item[(ii)] $\|x^{+}-x\|\leq\dfrac{\alpha}{1-\gamma}\|F(y)\|$.
\end{enumerate}
As a consequence, if   $F$ is   Lipschitz continuous on ${\cal C}$ with  constant $L > 0$, then it holds:
\begin{equation} \label{eq:ProjPropertyii}
\|x^{+}-x\|\leq \dfrac{\alpha(1-\gamma+\alpha L)}{(1-\gamma)^2}\|F(x)\|.
\end{equation}
\end{corollary}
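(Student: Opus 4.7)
The plan is to observe that items (i) and (ii) are essentially immediate restatements of Lemma~\ref{Le:ProjProperty}(ii) applied to the two feasible inexact projections defining $y$ and $x^{+}$, and then combine them with the Lipschitz hypothesis via the triangle inequality to obtain \eqref{eq:ProjPropertyii}.

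For item (i), I would invoke Lemma~\ref{Le:ProjProperty}(ii) with the choice $z=x$, $\alpha=\alpha$, and $w(\alpha)=y$, since by assumption $y \in {\cal P}^{\gamma}_{\cal C}(x, x-\alpha F(x))$, which is exactly the form required by the lemma. The bound $\|y-x\|\leq \frac{\alpha}{1-\gamma}\|F(x)\|$ then drops out directly. For item (ii), I would apply the same lemma once more, this time with $z=y$ and $w(\alpha)=x^{+}$, which is admissible since $x^{+} \in {\cal P}^{\gamma}_{\cal C}(x, x-\alpha F(y))$ fits the template of the lemma (note that only the center point $x$ of the inexact projection needs to belong to ${\cal C}$, not $z$). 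This yields $\|x^{+}-x\|\leq \frac{\alpha}{1-\gamma}\|F(y)\|$.

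To obtain the consequence \eqref{eq:ProjPropertyii}, I would start from item (ii) and control $\|F(y)\|$ by $\|F(x)\|$. Writing $\|F(y)\| \leq \|F(y)-F(x)\| + \|F(x)\|$ and using the Lipschitz continuity of $F$ on ${\cal C}$, I get $\|F(y)\| \leq L\|y-x\| + \|F(x)\|$, and substituting the bound from item (i) gives
\begin{equation*}
\|F(y)\| \;\leq\; \frac{L\alpha}{1-\gamma}\|F(x)\| + \|F(x)\| \;=\; \frac{1-\gamma+\alpha L}{1-\gamma}\|F(x)\|.
\end{equation*}
Plugging this back into item (ii) produces exactly the factor $\frac{\alpha(1-\gamma+\alpha L)}{(1-\gamma)^2}$ claimed in \eqref{eq:ProjPropertyii}.

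There is no real obstacle here: the work has already been done inside Lemma~\ref{Le:ProjProperty}, and the only subtlety is noticing that the lemma is flexible enough to accommodate either $z=x$ or $z=y$, so that both $y$ and $x^{+}$ can be bounded by the same mechanism. The Lipschitz step is just a triangle-inequality estimate and algebraic bookkeeping to collect the factor $(1-\gamma)^{2}$ in the denominator.
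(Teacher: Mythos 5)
Your proposal is correct and follows exactly the paper's own argument: items (i) and (ii) are obtained by applying Lemma~\ref{Le:ProjProperty}(ii) with $(z,w(\alpha))=(x,y)$ and $(z,w(\alpha))=(y,x^{+})$ respectively, and \eqref{eq:ProjPropertyii} then follows from the same triangle-inequality and Lipschitz estimate $\|F(y)\|\leq \frac{1-\gamma+\alpha L}{1-\gamma}\|F(x)\|$ substituted into item (ii). Your remark that only the reference point $x$ needs to lie in ${\cal C}$ is a correct reading of the lemma.
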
 
\begin{proof}
Applying   item $(ii)$ of Lemma~\ref{Le:ProjProperty} with  $w(\alpha)=y$ and $z=x$ we obtain  item $(i)$  and  with  $w(\alpha)=x^{+}$ and $z=y$ we obtain  item (ii). We proceed to prove \eqref{eq:ProjPropertyii}. For that, we first note that  due to  $F$  be Lipschitz continuous on ${\cal C}$ with  constant $L > 0$, we obtain that 
$$
\|F(y)\|\leq  \|F(y)-F(x)\|+\|F(x)\|\leq L\|y-x\|+\|F(x)\|.
$$
Thus, considering that $y  \in {\cal P}^{\gamma}_{\cal C}(x, x-\alpha F(x))$, we can apply the  item $(i)$  to obtain 
$$
\|F(y)\|\leq  \dfrac{\alpha L}{1-\gamma}\|F(x)\|+\|F(x)\|= \dfrac{1-\gamma+\alpha L}{1-\gamma}\|F(x)\|.
$$
By combing the last inequality with item $(ii)$, the desired inequality follows.
\end{proof}
\section{Extragradient inexact  method with constant step size} \label{Sec:ExtraGradmeMethod}
In this section, we describe the extragradient method with a feasible inexact projection for solving problem~\eqref{eq:mp}. It should be noted that the proposed method uses appropriate relative error criteria to compute inexact projections on the constraint set, in contrast to the extragradient method, which employs exact projections on the constraint set. The inexact version of the  classical extragradient method   is stated  as follows:
\begin{algorithm}[h]
\caption{\footnotesize \bf Extragradient  inexact  projection method-EInexPM}
\begin{footnotesize}
	\begin{algorithmic}[1] 
	\State {Take  $\alpha>0$, $0<\bar{\gamma}<1/2$ and  $(a_k)_{k\in\mathbb{N}}$  satisfying  $\sum_{k \in \mathbb{N}} a_k<+\infty$. Let $x^{1}\in {\cal C}$ and set $k=1$.}
          \State{
	 Choose an error tolerance  $\gamma_k$ such that  
				\begin{equation} \label{eq:Tolerance}
					0 \leq \gamma_k \|F(x^{k})\|^2  \leq a_k, \qquad  0 \leq \gamma_k < \bar{\gamma},
				\end{equation}
	and  compute the  following feasible inexact projections:
	\begin{align}
					y^{k} &\in {\cal P}^{\gamma_k}_{\cal C}\big(x^{k}, x^k-\alpha F(x^k) \big); \label{eq:IntStep1}\\
					x^{k+1} &\in {\cal P}^{\gamma_k}_{\cal C}\big(x^{k}, x^k-\alpha F(y^k) \big). \label{eq:IntStep2}
	\end{align}
	}
        \State{If $x^k=y^k$ or $y^k=x^{k+1}$, {\bf stop}, else set $k\gets k+1$, and go to step 2.}
	\end{algorithmic}
\end{footnotesize}
 \label{Alg:ExtraGradMethod}
\end{algorithm}

Let us examine the main features of the  EInexPM. To begin, we select a constant step size $\alpha>0$, an upper bound for error tolerances ${\bar \gamma}$ such that $0<\bar{\gamma}<1/2$ and select  an exogenous summable sequence $(a_k)_{k\in\mathbb{N}}$ to control the error tolerance.   The stopping criterion $F(x^{k}) = 0$ is then evaluated in the current iteration $x^{k}$. If this criterion is not satisfied, a non-negative error tolerance $\gamma_k$ that fulfils the requirements \eqref{eq:Tolerance} is selected. By using an inner procedure compute $y^k$ as any feasible inexact projection $x^k - \alpha F(x^k)$ onto the feasible set  ${\cal C}$ relative to $x^k$, i.e.  $y^{k} \in {\cal P}^{\gamma_k}_{\cal C}\big(x^{k}, x^k-\alpha F(x^k) \big)$. Finally,  using again an inner procedure,  the next iterate $x^{k+1}$ is computed as any feasible inexact projection of $x^k - \alpha F(y^k)$ onto the feasible set ${\cal C}$ relative to $x^k$, i.e., $x^{k+1} \in {\cal P}^{\gamma_k}_{\cal C}\big(x^{k}, x^k-\alpha F(y^k) \big)$. 
 
It is worth noting that if $\gamma_k \equiv 0$, then Remark~\ref{rem: welldef} implies that inexact projections are the exact ones. Hence, EInexPM corresponds to the classical extragradient method introduced in \cite{Korpelevivc76}.  It is important to note that $\gamma_k$  in \eqref{eq:Tolerance} can be selected as any nonnegative real number fulfilling  $0 \leq \gamma_k  \|F(x^{k})\|^2 \leq a_k$, for a prefixed sequence $(a_k)_{k\in\mathbb{N}}$. In this case, we have
\begin{equation} \label{eq:ToleranceCond}
\sum_{k \in \mathbb{N}} \big(\gamma_k\|F(x^k)\|^2\big) < +\infty.
\end{equation}
Since  $x^{1}\in {\cal C}$ and,  for all $k \in \mathbb{N}$,  $x^{k+1}$ is a feasible inexact projection onto ${\cal C}$, we conclude  $(x^k)_{k\in\mathbb{N}}\subset {\cal C}$. As a consequence of  ${\cal C}$ being a closed set, any cluster point of $(x^k)_{k\in\mathbb{N}}$, if any exists, belongs to ${\cal C}$. 

Next we present two examples of sequences $(a_k)_{k\in\mathbb{N}}$ satisfying  $\sum_{k \in \mathbb{N}} a_k<+\infty$.
\begin{example}
	Sequences  $(a_k)_{k\in\mathbb{N}}$ satisfying  $\sum_{k \in \mathbb{N}} a_k<+\infty$ are obtained  by taking $a_{k}:=b_{k-1}-b_{k}$ and  $\bar{b}>0$ satisfying one the following conditions:  (i) $b_0=2\bar{b}$, $b_k=\bar{b}/k$, for all $k=1, 2, \ldots$; (ii)$b_0=2\bar{b}$, $b_k=\bar{b}/\ln(k+1)$, for all $k=1, 2, \ldots$.
\end{example}
The convergence analysis of the sequence  $(x^k)_{k\in\mathbb{N}}$ produced by EInexPM will be discussed in the following sections.
\subsection{Convergence analysis}
We will show in this section that the sequence   $(x^k)_{k\in\mathbb{N}}$ generated by EInexPM converges to a solution of VIP(F, ${\cal C}$) for Lipschitz continuous pseudo-monotone  operator $F$ with Lipschitz constant $L\geq 0$.  To state our first result, let us recall that the solution set of  VIP(F, ${\cal C}$) is denoted by ${\cal C}^*$ and define the following constants:
\begin{equation} \label{eq:constetanu}
 {\bar \eta}:=1- \alpha^2 L^2-2{\bar \gamma}, \qquad \qquad  {\bar \nu}:=\dfrac{\alpha^2(1-{\bar \gamma}+\alpha L)^2}{(1-{\bar \gamma})^4}.
 \end{equation}
 
The convergence of the sequence $(x^k)_{k \in \mathbb{N}}$, generated by Algorithm~\ref{Alg:ExtraGradMethod}, relies on the following lemma. As we will demonstrate, under the condition \eqref{eq:Tolerance}, this lemma implies that $(x^k)_{k \in \mathbb{N}}$ is quasi-Fej'er convergent to the set ${\cal C}^*$.
 
\begin{lemma}\label{le:FejerProperty}
Let ${\cal C}\subset {\mathbb R}^n$  be  a nonempty and closed  convex set and $(x^k)_{k\in{\mathbb N}}$ the sequence generated by Algorithm~\ref{Alg:ExtraGradMethod}. Assume that   $F: {\mathbb R}^n\to {\mathbb R}^n$ is a   pseudo monotone operator on  ${\cal C}$ with respect to  ${\cal C}^*$ and  Lipschitz continuous on ${\cal C}$ with constant $L > 0$. Then, for any $x^*\in {\cal C}^*$, there holds:
\begin{equation*}
\|x^{k+1}-x^*\|^2\leq \|x^k-x^*\|^2-  {\bar \eta} \|x^{k}-y^{k}\|^2 + {\bar \nu} \gamma_k\|F(x^k)\|^2, \qquad k=1, 2, \ldots.
\end{equation*} 
\end{lemma}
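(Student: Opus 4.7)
The plan is to show this inequality by the standard ``extragradient'' route, adapted to inexact projections. I would start by writing down the two defining inequalities of inexact projection applied to the current iterates:
\begin{align*}
\bigl\langle x^k-\alpha F(y^k)-x^{k+1},\, y-x^{k+1}\bigr\rangle &\leq \gamma_k\|x^{k+1}-x^k\|^2,\quad \forall y\in{\cal C},\\
\bigl\langle x^k-\alpha F(x^k)-y^k,\, y-y^k\bigr\rangle &\leq \gamma_k\|y^k-x^k\|^2,\quad \forall y\in{\cal C}.
\end{align*}
Pick $y=x^*\in{\cal C}^*$ in the first and $y=x^{k+1}$ in the second. In the first, I would use the algebraic identity $2\langle x^k-x^{k+1},x^*-x^{k+1}\rangle = \|x^{k+1}-x^*\|^2+\|x^k-x^{k+1}\|^2-\|x^k-x^*\|^2$ to pull out $\|x^{k+1}-x^*\|^2$ and produce
\[
\|x^{k+1}-x^*\|^2 \leq \|x^k-x^*\|^2 - (1-2\gamma_k)\|x^k-x^{k+1}\|^2 + 2\alpha\bigl\langle F(y^k),\, x^*-x^{k+1}\bigr\rangle.
\]

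Next I would split the inner product via $x^*-x^{k+1}=(x^*-y^k)+(y^k-x^{k+1})$. Pseudo-monotonicity of $F$ with respect to ${\cal C}^*$ gives $\langle F(y^k), y^k-x^*\rangle\geq 0$, so the $x^*-y^k$ piece is $\leq 0$ and can be dropped. The remaining piece $2\alpha\langle F(y^k),y^k-x^{k+1}\rangle$ I would rewrite as $2\alpha\langle F(y^k)-F(x^k),y^k-x^{k+1}\rangle+2\alpha\langle F(x^k),y^k-x^{k+1}\rangle$. For the first summand, Lipschitz continuity together with $2ab\leq a^2+b^2$ gives the bound $\alpha^2L^2\|y^k-x^k\|^2+\|y^k-x^{k+1}\|^2$. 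For the second summand, the second inexact projection inequality rearranges to $\alpha\langle F(x^k),y^k-x^{k+1}\rangle \leq \gamma_k\|y^k-x^k\|^2-\langle x^k-y^k,x^{k+1}-y^k\rangle$, and the polarization identity $-2\langle x^k-y^k,x^{k+1}-y^k\rangle=\|x^k-x^{k+1}\|^2-\|x^k-y^k\|^2-\|x^{k+1}-y^k\|^2$ cleanly kills the stray $\|y^k-x^{k+1}\|^2$ term.

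Assembling these pieces, the $\|x^{k+1}-y^k\|^2$ contributions cancel, and I would obtain
\[
\|x^{k+1}-x^*\|^2 \leq \|x^k-x^*\|^2 + 2\gamma_k\|x^k-x^{k+1}\|^2 - \bigl(1-\alpha^2L^2-2\gamma_k\bigr)\|y^k-x^k\|^2.
\]
Since $\gamma_k<\bar\gamma$, the coefficient of $\|y^k-x^k\|^2$ is bounded below by $\bar\eta$. It remains to absorb the $\|x^k-x^{k+1}\|^2$ term into $\gamma_k\|F(x^k)\|^2$, and here I would invoke inequality \eqref{eq:ProjPropertyii} from Corollary~\ref{cr:ProjPropertyii}. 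A short monotonicity argument shows that the function $\gamma\mapsto\alpha(1-\gamma+\alpha L)/(1-\gamma)^2$ is increasing on $[0,1)$, so replacing $\gamma_k$ by $\bar\gamma$ in the bound gives $\|x^k-x^{k+1}\|^2\leq \bar\nu\|F(x^k)\|^2$, matching the definition of $\bar\nu$ in \eqref{eq:constetanu}.

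The main obstacle, in my view, is the careful bookkeeping in the step combining the Lipschitz/Young bound with the polarization identity: one has to verify that the $\|y^k-x^{k+1}\|^2$ terms exactly cancel (rather than merely allow a Young-type absorption), because otherwise the remaining free constant in front of $\|y^k-x^k\|^2$ would be too large to yield the clean value $\bar\eta=1-\alpha^2L^2-2\bar\gamma$. The other delicate point is the bound on $\|x^k-x^{k+1}\|^2$: one must use the iterate-uniform constant $\bar\nu$ in place of the $k$-dependent bound produced by Corollary~\ref{cr:ProjPropertyii}, which is why monotonicity in $\gamma$ is needed.
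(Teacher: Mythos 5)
Your proposal is correct and follows essentially the same route as the paper's proof: the same two inexact-projection inequalities evaluated at $y=x^*$ and $y=x^{k+1}$, pseudo-monotonicity to discard $\langle F(y^k),x^*-y^k\rangle$, the Lipschitz step (your Young bound $2ab\le a^2+b^2$ is exactly the paper's completed square $-(\alpha L\|x^k-y^k\|-\|x^{k+1}-y^k\|)^2$, which it then drops), and Corollary~\ref{cr:ProjPropertyii} together with monotonicity in $\gamma$ to pass from $\gamma_k$ to $\bar\gamma$. The only discrepancy is that your (correct) bookkeeping leaves $2\gamma_k\|x^{k+1}-x^k\|^2$ and hence a final constant $2\bar\nu$ rather than $\bar\nu$; the paper reaches $\bar\nu$ only because it silently drops a factor of $2$ when combining \eqref{eq:poee} with the subsequent estimate, and this is immaterial since Theorem~\ref{th:ConvTheoCEInexPM} uses only the summability of the error term.
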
 
\begin{proof}
First note that 
\begin{multline*}
\big\langle x^{k}-\alpha F(y^{k})-y^{k}, x^{k+1}-y^{k}\big\rangle=\big\langle x^{k}-\alpha F(x^{k})-y^{k}, x^{k+1}-y^{k}\big\rangle \\+ \alpha\big\langle F(x^{k})- F(y^{k}), x^{k+1}-y^{k}\big\rangle.
\end{multline*}
Since \eqref{eq:IntStep1} implies that $y^{k} \in {\cal P}^{\gamma_k}_{\cal C}\big(x^{k}, x^k - \alpha F(x^k) \big)$,  we conclude that 
\begin{equation} \label{eq:faip}
\big\langle x^{k}-\alpha F(y^{k})-y^{k}, x^{k+1}-y^{k}\big\rangle\leq \gamma_k \|y^{k}-x^{k}\|^2+ \alpha\big\langle  F(x^{k})- F(y^{k}),x^{k+1}-y^{k}\big\rangle.
\end{equation}
For simplicity set $z^k:=x^k-\alpha F(y^k)$. As nothing more than a result of some algebraic manipulations, we arrive to the conclusion that
\begin{align*}
\|x^{k+1}-x^*\|^2&=\|x^{k+1}-z^{k}\|^2+ \|z^{k}-x^*\|^2-2 \langle x^{k+1}-z^{k},  x^*-z^{k} \rangle \\
                         &=-\|x^{k+1}-z^{k}\|^2+ \|z^{k}-x^*\|^2+2 \langle z^{k}-x^{k+1},  x^*- x^{k+1} \rangle.
\end{align*}
Using that $x^{k+1} \in {\cal P}^{\gamma_k}_{\cal C}\big(x^{k}, z^k \big)$ we conclude  that $ \langle z^{k}-x^{k+1},  x^*- x^{k+1} \rangle \leq \gamma_k \|x^{k+1}-x^{k}\|^2$, which combined with the previous equality give us 
\begin{equation}\label{eq:poee}
\|x^{k+1}-x^*\|^2\leq \|z^{k}-x^*\|^2-\|x^{k+1}-z^{k}\|^2+2\gamma_k \|x^{k+1}-x^{k}\|^2
\end{equation}
Taking into account that $z^k=x^k-\alpha F(y^k)$, some calculations show that
\begin{align*}
\|z^{k}-x^*\|^2-\|x^{k+1}-z^{k}\|^2&=\|x^k-x^*-\alpha F(y^k)\|^2-\|x^k-x^{k+1}-\alpha F(y^k)\|^2\notag\\
                                                   &= \|x^k-x^*\|^2-\|x^k-x^{k+1}\|^2+2\alpha\big\langle F(y^k), x^*-x^{k+1}\big\rangle.
\end{align*}
On the other hand, considering that $x^*\in {\cal C}^*$, $y^{k}\in {\cal C}$ and $F$ is  pseudo monotone  operator  on  ${\cal C}$ with respect to  ${\cal C}^*$ we have   $\big\langle F(y^{k}),  y^{k}-{x^*} \big\rangle\geq 0$. Thus, we conclude that 
\begin{equation*}
\big\langle F(y^{k}),  {x^*}-x^{k+1} \big\rangle\leq \big\langle F(y^{k}),  y^{k}-x^{k+1}\big\rangle.
\end{equation*} 
The last inequality, when combined with the previous equality, implies that
\begin{equation*}
\|z^{k}-x^*\|^2-\|x^{k+1}-z^{k}\|^2\leq  \|x^k-x^*\|^2-\|x^k-x^{k+1}\|^2+2\alpha \big\langle F(y^{k}),  y^{k}-x^{k+1} \big\rangle.
\end{equation*}
The previous inequality is now combined with \eqref{eq:poee} to provide the following inequality
\begin{equation*}
\|x^{k+1}-x^*\|^2\leq \|x^k-x^*\|^2-\|x^k-x^{k+1}\|^2+\gamma_k \|x^{k+1}-x^{k}\|^2 +2\alpha \big\langle F(y^{k}),  y^{k}-x^{k+1} \big\rangle.
\end{equation*}
Since  $\|x^{k}-x^{k+1}\|^2=\|x^{k}-y^{k}\|^2 + \|y^{k}-x^{k+1}\|^2+2\langle x^{k}-y^{k}, y^{k}-x^{k+1}\rangle$, the last inequality is equivalent to 
\begin{multline*}
\|x^{k+1}-x^*\|^2\leq \|x^k-x^*\|^2-\|x^{k}-y^{k}\|^2 - \|y^{k}-x^{k+1}\|^2+\gamma_k\|x^{k+1}-x^{k}\|^2 +\\2 \big\langle x^{k} -\alpha F(y^{k})-y^{k},  x^{k+1}-y^{k}\big\rangle.
\end{multline*}
 The last inequality together with  \eqref{eq:faip} yield
\begin{multline*}
\|x^{k+1}-x^*\|^2\leq \|x^k-x^*\|^2-\|x^{k}-y^{k}\|^2 - \|y^{k}-x^{k+1}\|^2+\gamma_k\|x^{k+1}-x^{k}\|^2 \\+2\gamma_k\|y^{k}-x^{k}\|^2+ 2\alpha\big\langle  F(x^{k})- F(y^{k}),x^{k+1}-y^{k}\big\rangle.
\end{multline*}
Considering that $F$ is  Lipschitz continuous on ${\cal C}$ with  constant $L > 0$, we have 
\begin{equation*}
\langle  F(x^{k})- F(y^{k}),x^{k+1}-y^{k}\rangle\leq L \|x^{k}-y^{k}\|\|x^{k+1}-y^{k}\|, 
\end{equation*}
which combined with the last inequality  yields 
\begin{multline*}
\|x^{k+1}-x^*\|^2\leq \|x^k-x^*\|^2-\|x^{k}-y^{k}\|^2 - \|y^{k}-x^{k+1}\|^2+\gamma_k\|x^{k+1}-x^{k}\|^2 \\+2\gamma_k\|y^{k}-x^{k}\|^2+ 2\alpha L \|x^{k}-y^{k}\|\|x^{k+1}-y^{k}\|.
\end{multline*}
or equivalently, 
 \begin{multline*}
\|x^{k+1}-x^*\|^2\leq \|x^k-x^*\|^2- (1- \alpha^2 L^2-2\gamma_k) \|x^{k}-y^{k}\|^2 \\+\gamma_k\|x^{k+1}-x^{k}\|^2 -(\alpha L \|x^{k}-y^{k}\|-\|x^{k+1}-y^{k}\|)^2.
\end{multline*}
Hence, we have 
$$
\|x^{k+1}-x^*\|^2\leq \|x^k-x^*\|^2- (1- \alpha^2 L^2-2\gamma_k) \|x^{k}-y^{k}\|^2 +\gamma_k\|x^{k+1}-x^{k}\|^2. 
$$
Thus, taking into account that  $x^{k+1} \in {\cal P}^{\gamma_k}_{\cal C}\big(x^{k}, x^k-\alpha F(y^k) \big)$, by applying  Corollary~\ref{cr:ProjPropertyii} with  $x^{+}=x^{k+1}$, $x=x^{k}$ and $\gamma=\gamma_k$ we obtain that 
$$
\|x^{k+1}-x^*\|^2\leq \|x^k-x^*\|^2- (1- \alpha^2 L^2-2\gamma_k) \|x^{k}-y^{k}\|^2 + \dfrac{\alpha^2(1-\gamma_k+\alpha L)^2}{(1-\gamma_k)^4} \gamma_k\|F(x^k)\|^2. 
$$
Therefore, using \eqref{eq:constetanu} and considering  that $0 \leq \gamma_k < \bar{\gamma}$,  the desired  inequality follows.
\end{proof} 
The following theorem represents the central outcome concerning Algorithm~\ref{Alg:ExtraGradMethod}. It establishes the convergence of the sequence towards a solution of VIP(F,${\cal C}$).
\begin{theorem} \label{th:ConvTheoCEInexPM}
Let ${\cal C}\subset {\mathbb R}^n$  be  a nonempty and closed  convex set and $(x^k)_{k\in{\mathbb N}}$ the sequence generated by Algorithm~\ref{Alg:ExtraGradMethod}. Assume that   $F: {\mathbb R}^n\to {\mathbb R}^n$ is  a   pseudo monotone operator  on  ${\cal C}$ with respect to  ${\cal C}^*$ and  Lipschitz continuous on ${\cal C}$ with  constant $L > 0$. If 
\begin{equation} \label{eq:ccs}
0<\alpha<\frac{\sqrt{1-2{\bar \gamma}}}{L}, 
\end{equation} 
then  the sequence $(x^k)_{k\in{\mathbb N}}$ converges to a solution of the VIP(F,${\cal C}$).
\end{theorem}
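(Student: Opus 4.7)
The plan is to deduce everything from Lemma~\ref{le:FejerProperty} together with the hypotheses. First, I would observe that condition \eqref{eq:ccs} is exactly what makes the coefficient ${\bar\eta}=1-\alpha^2 L^2-2{\bar\gamma}$ in \eqref{eq:constetanu} strictly positive, since $\alpha^2 L^2 < 1-2{\bar\gamma}$. On the other hand, the tolerance rule \eqref{eq:Tolerance} combined with summability of $(a_k)_{k\in\mathbb{N}}$ yields \eqref{eq:ToleranceCond}, namely $\sum_{k\in\mathbb{N}} \gamma_k\|F(x^k)\|^2<+\infty$.

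Setting $\epsilon_k:={\bar\nu}\,\gamma_k\|F(x^k)\|^2$ (which is summable) and using ${\bar\eta}\geq 0$, Lemma~\ref{le:FejerProperty} gives $\|x^{k+1}-x^*\|^2\leq \|x^k-x^*\|^2+\epsilon_k$ for every $x^*\in{\cal C}^*$. Hence $(x^k)_{k\in\mathbb{N}}$ is quasi-Fej\'er convergent to ${\cal C}^*$ in the sense of Definition~\ref{def:fejer}, and Lemma~\ref{le:fejer}(i) implies that $(x^k)_{k\in\mathbb{N}}$ is bounded. Now, telescoping the same inequality and using ${\bar\eta}>0$ together with the summability of $\epsilon_k$, I would extract
\begin{equation*}
 {\bar\eta}\sum_{k\in\mathbb{N}}\|x^k-y^k\|^2 \leq \|x^1-x^*\|^2 + \sum_{k\in\mathbb{N}} \epsilon_k < +\infty,
\end{equation*}
so in particular $\|x^k-y^k\|\to 0$.

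Since $(x^k)_{k\in\mathbb{N}}\subset{\cal C}$ is bounded and ${\cal C}$ is closed, I would select a cluster point ${\bar x}\in{\cal C}$ along a subsequence $(x^{k_j})_{j\in\mathbb{N}}$. Because $\|x^k-y^k\|\to 0$, the corresponding subsequence $(y^{k_j})_{j\in\mathbb{N}}$ also converges to ${\bar x}$. The critical step is then to show ${\bar x}\in{\cal C}^*$. For this, I would use the defining inequality of the feasible inexact projection for $y^{k_j}\in{\cal P}^{\gamma_{k_j}}_{\cal C}(x^{k_j},x^{k_j}-\alpha F(x^{k_j}))$:
\begin{equation*}
\big\langle x^{k_j}-\alpha F(x^{k_j})-y^{k_j},\, y-y^{k_j}\big\rangle \leq \gamma_{k_j}\|y^{k_j}-x^{k_j}\|^2, \qquad \forall\, y\in{\cal C}.
\end{equation*}
Taking $j\to\infty$, the right-hand side tends to $0$ since $0\leq\gamma_{k_j}<{\bar\gamma}$ and $\|y^{k_j}-x^{k_j}\|\to 0$; on the left-hand side, $x^{k_j}-y^{k_j}\to 0$ and $F$ is continuous (being Lipschitz), hence the limit gives $-\alpha\langle F({\bar x}),y-{\bar x}\rangle\leq 0$ for every $y\in{\cal C}$. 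Since $\alpha>0$, this proves ${\bar x}\in{\cal C}^*$.

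Finally, having a cluster point ${\bar x}$ of the quasi-Fej\'er sequence $(x^k)_{k\in\mathbb{N}}$ that belongs to ${\cal C}^*$, Lemma~\ref{le:fejer}(ii) yields that the whole sequence converges to ${\bar x}\in{\cal C}^*$, completing the proof. The main obstacle I anticipate is the passage to the limit in the inexact-projection inequality, which requires controlling the error term $\gamma_{k_j}\|y^{k_j}-x^{k_j}\|^2$; this is handled by the uniform bound ${\gamma_k}<{\bar\gamma}$ combined with $\|x^k-y^k\|\to 0$ obtained from the telescoping argument. (If the algorithm stops in finite time at $x^k=y^k$, then Corollary~\ref{cr:equival}(ii) immediately identifies $x^k$ as a solution; the case $y^k=x^{k+1}$ is analogous via the same corollary applied to the second projection relation.)
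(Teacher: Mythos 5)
Your proposal is correct and follows essentially the same route as the paper's proof: positivity of ${\bar \eta}$ from \eqref{eq:ccs}, quasi-Fej\'er convergence via Lemma~\ref{le:FejerProperty} and \eqref{eq:ToleranceCond}, the telescoping bound giving $\|x^k-y^k\|\to 0$, passage to the limit in the inexact-projection inequality to identify the cluster point as a solution, and Lemma~\ref{le:fejer}(ii) to conclude. The only (harmless) addition is your remark on the finite-termination case, which the paper handles separately.
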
 
\begin{proof}
Let $x^*\in {\cal C}^*$ be a arbitrary solution of the VIP(F,${\cal C}$).  The condition \eqref{eq:ccs}  and $0<\bar{\gamma}<1/2$ imply that  ${\bar \eta} >0$ and ${\bar \nu} >0$. Thus,  it follows from Lemma~\ref{le:FejerProperty} that 
\begin{equation*} 
\|x^{k+1}-x^*\|^2\leq \|x^k-x^*\|^2+ {\bar \nu} \gamma_k\|F(x^k)\|^2, \qquad k=1, 2, \ldots.
\end{equation*} 
The last inequality together with \eqref{eq:ToleranceCond} implies that $(x^k)_{k\in{\mathbb N}}$  is  quasi-Fej\'er convergent to  ${\cal C}^*$. Considering that ${\cal C}^*$ is nonempty,  the item (i) of Lemma~\ref{le:fejer} implies  that $(x^k)_{k\in{\mathbb N}}$  is bounded. Let ${\bar x}$ be a  cluster point of  $(x_k)_{k\in\mathbb{N}}$ and   $(x_{k_j})_{j\in\mathbb{N}}$ a subsequence of $(x_k)_{k\in\mathbb{N}}$ such that $\lim_{j\to +\infty}x_{k_j}={\bar x}$.  To continue the proof, keep in mind that Lemma~\ref{le:FejerProperty} also implies that
\begin{equation*} 
{\bar \eta} \|x^{k}-y^{k}\|^2\leq \|x^k-x^*\|^2-\|x^{k+1}-x^*\|^2+ {\bar \nu} \gamma_k\|F(x^k)\|^2, \qquad k=1, 2, \ldots.
\end{equation*} 
By adding both sides of the previous inequality and using \eqref{eq:ToleranceCond}, we arrive at the conclusion that
\begin{equation*} 
{\bar \eta} \sum_{k=0}^{+\infty}\|x^{k}-y^{k}\|^2\leq \|x^{1}-x^*\|^2+ {\bar \nu}  \sum_{k=0}^{+\infty}\gamma_k\|F(x^k)\|^2<+\infty.
\end{equation*}
Hence, we have $\lim_{k\to +\infty}\|x^{k}-y^{k}\|=0$. Thus, taking into account that $\lim_{j\to +\infty}x_{k_j}={\bar x}$,  we conclude that $\lim_{j\to +\infty}y_{k_j}={\bar x}$. It follows from \eqref{eq:IntStep1}  that 
$$
y^{k_j} \in {\cal P}^{\gamma_{k_j}}_{\cal C}\big(x^{k_j}, x^{k_j}-\alpha F(x^{k_j}) \big).
$$
Considering that $ \gamma_{k_j} < \bar{\gamma}$, the last inclusion and  Definition~\ref{def:InexactProj} imply that 
\begin{equation*} 
\big\langle x^{k_j}-\alpha F(x^{k_j})-y^{k_j}, y-y^{k_j} \big\rangle \leq  \gamma \|y^{k_j}-x^{k_j}\|^2,\qquad \qquad ~\forall y \in {\cal C}.
\end{equation*}
Since $\lim_{j\to +\infty}x_{k_j}={\bar x}$ and $\lim_{j\to +\infty}y_{k_j}={\bar x}$, taking  the limit in the previous inequality as $j$ tending to infinity yields
\begin{equation*}
\big\langle {\bar x}-\alpha F({\bar x})-{\bar x}, y-{\bar x}\big\rangle \leq {\bar \gamma}\|{\bar x}-{\bar x}\| {},\qquad \qquad ~\forall y \in {\cal C}.
\end{equation*}
which, by using that $\alpha>0$,  is equivalent to $\big\langle F({\bar x}), y-{\bar x}\big\rangle \geq  0$, for all $y \in {\cal C}$. Hence, ${\bar x}\in {\cal C}^*$.  Given that ${\bar x}$ is a cluster point of $(x_k)_{k\in\mathbb{N}}$, item (ii) of Lemma~\ref{le:fejer} implies that $\lim_{k\to +\infty}x_{k}={\bar x}$, and the proof is complete.
\end{proof} 
\section{Extragradient inexact method with line search} \label{Se:ExtGradLineSerch}
In this section, we introduce  an inexact variant of the extragradient method for VIP($F,{\cal C}$) with  $F$ pseudo monotone  operator   on  ${\cal C}$ with respect to the solution set  ${\cal C}^*$ of  problem~\eqref{eq:mp},  see for example \cite{IusemSvaiter1997,millanbello,konov}. Instead of a fixed step size, the method presented finds a suitable step size in each iteration by performing an Armijo type line search. It is worth noting that, like the classical extragradient method, the method does just two projections into the feasible set in each iteration. A full convergence analysis is provided, with no Lipschitz continuity assumption of the operator defining the variational inequality problem.

The inexact  version of the  proposed version of the  extragradient method   is stated  as follows:

\begin{algorithm}[h]
\begin{footnotesize}
	\begin{algorithmic}[1]
	\State {Take $0< {\hat \beta}\leq{\bar \beta}$ and  $(\beta_k)_{k\in\mathbb{N}}$  satisfying  $0< {\hat \beta}\leq\beta_k\leq{\bar \beta}$. Take also  $\sigma$, $\rho$ and  $\alpha \in (0,1)$, and 
	\begin{equation} \label{eq:bargamma}
	0<\bar{\gamma}<\min\big\{1-\rho, 2-\sqrt{3}\big\}.
	\end{equation}
	Let $x^1\in {\cal C}$ and set $k=1$.}
          \vspace{0.2cm}
          \State{
	Choose a error tolerance  $\gamma_k$ such that  $0 \leq \gamma_k < \bar{\gamma}$ and compute the  following feasible inexact projection:
				\begin{equation} \label{eq:ipyk}
					y^{k} \in {\cal P}^{\gamma_k}_{\cal C}\big(x^{k}, x^k-\beta_k F(x^k) \big).
				\end{equation}
	}
	 \State{ If $y^{k}=x^{k}$, then {\bf stop}; otherwise, compute
	        \begin{equation} \label{eq:algjk}
i_{k}:=\min \Big\{ i\in {\mathbb N}:~\big{\langle} F {\big(}x^k+\sigma \alpha^{i} (y^k-x^k) {\big )},y^{k}-x^{k} \big{\rangle} \leq \rho  \big{\langle} F(x^k), y^{k}-x^{k}\big{\rangle} \Big \}, 
                  \end{equation}
                  and set 
                  \begin{equation} \label{eq:ipzk}
					z^{k}:=x^{k}+\sigma \alpha^{i_k}(y^{k}-x^k) .
		 \end{equation}
		}
	\State{Compute the next iteration as a feasible inexact projection:
 \begin{equation} \label{eq:etakfs}
x^{k+1} \in {\cal P}^{\gamma_k}_{\cal C}\big(x^{k}, x^k-\lambda_k F(z^k) \big), \qquad \quad \lambda_k:=-\frac{1}{\|F(z^{k})\|^2}\big{\langle} F( z^k) ,z^{k}-x^k \big{\rangle}.
\end{equation}      
	        }
 \vspace{0.2cm}
 
        \State{ Update   $k \leftarrow k+1$ and go to Step~2.}
	\end{algorithmic}
\end{footnotesize}	
\caption{{\footnotesize \bf Extragradient inexact projection method with line search-EInexPMLS}}
\label{Alg:ExtraGradMethodLS}
\end{algorithm}

Let us go through the main features of the  EInexPMLS. First, we must select some parameters that will control the behaviour of the algorithm and will be essential in your convergence analysis. The most important of these parameters is the upper bound ${\bar \gamma}$ for  error tolerance ${\gamma_k}$, which is connected to the line search parameter $\rho$.  In step 2  of the algorithm,   by using an inner procedure compute $y^k$ as any feasible inexact projection of $x^k - \beta_k F(x^k)$ onto the feasible set  ${\cal C}$ relative to $x^k$, i.e. \eqref{eq:ipyk}. Then,  in step 3, the conceptual stopping criterion $y^{k}=x^{k}$ is then evaluated in the current iteration. If this stopping criterion  is not satisfied, a line search in the segment between the points $x^k$ and $y^k$ is done in order to decrease the mapping 
$$
(0, 1) \ni t\mapsto  \langle F(x^k +t(y^{k}-x^k)) ,y^{k}-x^k \rangle.
$$ 
In step 4, the line search resultant point $z^k$ is utilized to define the following  half space
\begin{equation} \label{eq;HypSep}
H_k:=\Big\{x\in {\mathbb R}^n:~\langle F(z^k),x-z^k \rangle\leq 0\Big\}, 
\end{equation}
whose boundary  separates the current iterate $x_k$ of the solution set ${\cal C}^*$. Then, by applying Proposition~\ref{prop:0},   is computed   the  projection of  $x^k$ onto the hyperplane  $H_k$ as follows 
 \begin{equation} \label{eq:etakc}
{\PJ}_{H_k}(x^k) =x^k-\lambda_k F(z^k), \qquad \quad \lambda_k:=-\frac{1}{\|F(z^{k})\|^2}\big{\langle} F( z^k) ,z^{k}-x^k \big{\rangle}.
\end{equation}
Finally,  using again an inner procedure,  the next iterate $x^{k+1}$ is   computed as any feasible inexact projection of  ${\PJ}_{H_k}(x^k)$ onto the feasible set ${\cal C}$ relative to $x^k$. Thus, by using \eqref{eq:etakc},  we conclude that  \eqref{eq:etakfs} is equivalently stated as follows 
 \begin{equation} \label{eq:etakns}
					x^{k+1} \in {\cal P}^{\gamma_k}_{\cal C}\big(x^{k}, {\PJ}_{H_k}(x^k)\big),
 \end{equation}
It is noteworthy that if $\gamma_k \equiv 0$, then Remark~\ref{rem: welldef} implies that inexact projection is the exact one. Consequently, EInexPMLS corresponds to  a version of the  extragradient method addressed  in \cite{IusemSvaiter1997}, see also \cite{millanbello,buramillan}.
\begin{remark}\label{rem:stop}
    The stopping criteria is well defined, i.e.,  if $x^k=y^k$, then $x^k$ is a solution of the Problem \ref{eq:mp}. In fact,  $y^{k}=x^{k}$ and \eqref{eq:ipyk} implies that 
$
x^{k} \in {\cal P}^{\gamma_k}_{\cal C}\big(x^{k}, x^k-\alpha_k F(x^k) \big).
$
Thus, it follows from Corollary~\ref{cr:equival} that $x^k\in {\cal C}^*$. 
\end{remark}
Next,  we show that the Algorithm~\ref{Alg:ExtraGradMethodLS}  is well defined, namely that there exists $i_{k}$ fulfilling \eqref{eq:algjk}.
\begin{proposition}\label{prop:wellde}
Step 3 is well-defined,  i.e.,  there exists $i_k$ satisfying  \ref{eq:algjk}.
\end{proposition}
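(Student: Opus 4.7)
The plan is to show that the descent direction $y^k - x^k$ makes $\langle F(x^k), y^k - x^k\rangle$ \emph{strictly} negative (using the inexact projection characterization from Section~\ref{Sec:InexcProj}), and then to invoke continuity of $F$ to conclude that the Armijo-type inequality \eqref{eq:algjk} is satisfied for all sufficiently large $i$. The argument tacitly relies on $F$ being continuous on $\mathbb{R}^n$, which is standard in this setting.

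First I would apply Lemma~\ref{Le:ProjProperty}(i) to the inclusion \eqref{eq:ipyk}, taking $z=x^k$, $w(\alpha)=y^k$, and step size $\beta_k$. This gives
\begin{equation*}
\langle F(x^k),\, y^k - x^k\rangle \;\leq\; \frac{\gamma_k - 1}{\beta_k}\,\|y^k - x^k\|^2 .
\end{equation*}
Because the algorithm only reaches Step 3 when the stopping test in Step 3 fails, we have $y^k \neq x^k$, i.e.\ $\|y^k-x^k\|>0$. Moreover $\beta_k\geq \hat\beta>0$ and $\gamma_k < \bar\gamma < 1$, so the right-hand side is strictly negative, yielding $\langle F(x^k), y^k - x^k\rangle < 0$.

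With this strict negativity in hand, since $0<\rho<1$, I would observe the strict gap
\begin{equation*}
\rho\,\langle F(x^k),\, y^k - x^k\rangle \;>\; \langle F(x^k),\, y^k - x^k\rangle.
\end{equation*}
Now I would use continuity of $F$: as $i\to\infty$, we have $\sigma\alpha^{i}\to 0$ (since $\alpha\in(0,1)$) so $x^k+\sigma\alpha^{i}(y^k-x^k)\to x^k$, and therefore
\begin{equation*}
\bigl\langle F\bigl(x^k+\sigma\alpha^{i}(y^k-x^k)\bigr),\, y^k-x^k\bigr\rangle \;\longrightarrow\; \langle F(x^k), y^k-x^k\rangle.
\end{equation*}
Combined with the strict gap above, this forces
$\langle F(x^k+\sigma\alpha^{i}(y^k-x^k)),\, y^k-x^k\rangle \leq \rho\,\langle F(x^k), y^k-x^k\rangle$
for all sufficiently large $i$, so the set in \eqref{eq:algjk} is nonempty and its minimum $i_k$ exists.

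There is no real obstacle beyond recognizing that the inexact projection definition, through Lemma~\ref{Le:ProjProperty}(i), already delivers strict descent of $F$ along $y^k-x^k$; the rest is a standard Armijo-termination argument. The only subtlety is that the upper bound $\bar\gamma<1$ (guaranteed by \eqref{eq:bargamma}) is exactly what prevents $\gamma_k-1$ from vanishing, and so it is what gives strictness in the inequality above.
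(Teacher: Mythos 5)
Your proof is correct and uses the same two ingredients as the paper's: the inexact-projection inequality (via Lemma~\ref{Le:ProjProperty}(i)) to get strict negativity of $\langle F(x^k), y^k-x^k\rangle$ when $y^k\neq x^k$, and continuity of $F$ with $\alpha\in(0,1)$ to conclude the Armijo test holds for large $i$. The only difference is presentational: you argue directly that the index set is eventually satisfied, whereas the paper runs the same computation as a proof by contradiction.
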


\begin{proof}
Assume by contradiction that $\big{\langle} F {\big(}x^k+\sigma \alpha^{i} (y^k-x^k) {\big )},y^{k}-x^{k} \big{\rangle}  >  \big{\langle} F(x^k), y^{k}-x^{k}\big{\rangle}$, for all $i\in {\mathbb N}$ and $y^{k}\neq x^{k}$.  Since $F$ is continuous and $0<\alpha<1$, taking the limit in the last inequality as $i$ tends to infinity, we conclude that
$
\big{\langle} F(x^k) ,y^{k}-x^k \big{\rangle} \geq \rho  \big{\langle} F(x^k), y^{k}-x^{k}\big{\rangle}.
$
Thus, taking into account that $0<\rho<1$,  the last inequality that  
\begin{equation} \label{eq;fiwd1}
\big{\langle} F(x^k) ,y^{k}-x^k \big{\rangle} \geq 0.
\end{equation} 
Considering that $y^k\in {\cal P}^{\gamma_k}_{\cal C}(x^k,x^k-\beta_k F(x^k))$ and $x^k\in {\cal C}$, it follows from Definition~\ref{def:InexactProj} that 
$$
\langle x^k-\beta_kF(x^k)-y^k,x^k-y^k\rangle\leq \gamma_k\|y^k-x^k\|^2.
$$
Since $0< {\hat \beta}\leq\beta_k$ and $0<\bar{\gamma}<1$, we can deduce from some algebraic manipulations in the preceding inequality that
$$
0\leq \big{\langle} F(x^k) ,y^{k}-x^k \big{\rangle} \leq \frac{\gamma_k-1}{\beta_k}\|y^k-x^k\|^2<0,
$$
which is a contradiction.  Therefore, there exists $i_{k}$  satisfying  \eqref{eq:algjk}. 
\end{proof}

Let  { \it$(x_{k})_{k\in \mathbb{N}}$ be  the sequence  generated by Algorithm~\ref{Alg:ExtraGradMethodLS}}. Since $x^{1}\in {\cal C}$ and, for any $k \in \mathbb{N}$,  \eqref{eq:etakfs} implies that $x^{k+1}$ is a feasible inexact projection onto ${\cal C}$, we conclude that  $(x^k)_{k\in\mathbb{N}}\subset {\cal C}$. Consequently, due to  ${\cal C}$ be a closed set, each clustering point of $(x^k)_{k\in\mathbb{N}}$ that exists belongs to ${\cal C}$. 

\subsection{Convergence analysis}
In the previous section, we show that EInexPMLs generates a $(x^k)_{k\in\mathbb{N}}$ belonging   to the set ${\cal C}$. In this section we will show that    $(x^k)_{k\in\mathbb{N}}$ converges to a solution of VIP(F, ${\cal C}$).  To this purpose, we will begin by establishing a few initial results. First, we show that  the boundary of the half space $H_k$ defined as  in \eqref{eq;HypSep} separates   the current iterates $x_k$ of the solution set ${\cal C}^*$.
\begin{proposition}\label{prop:hk}
Let $H_k$ be defined as  in \eqref{eq;HypSep}. Then,    $x^k\in H_k$ if and only if $x^k\in \mathcal{C}^*$.
\end{proposition}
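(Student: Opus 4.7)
The plan is to prove both implications by carefully using Lemma~\ref{Le:ProjProperty} applied to the inexact projection defining $y^k$, combined with the Armijo condition \eqref{eq:algjk}. The key observation is that the algorithm's stopping criterion $y^k = x^k$ precisely detects membership in $\mathcal{C}^*$, and when $y^k \neq x^k$ the point $x^k$ is \emph{strictly} on the wrong side of the hyperplane bounding $H_k$.

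For the direction $x^k \in \mathcal{C}^* \Rightarrow x^k \in H_k$, I would start from the variational inequality $\langle F(x^k), y - x^k\rangle \geq 0$ for all $y \in \mathcal{C}$, applied to $y = y^k$ (which lies in $\mathcal{C}$ by \eqref{eq:ipyk}). Then I would apply Lemma~\ref{Le:ProjProperty}(i) with $\alpha = \beta_k$, $z = x^k$, and $w(\beta_k) = y^k$, giving $\langle F(x^k), y^k - x^k \rangle \leq \frac{\gamma_k - 1}{\beta_k}\|y^k - x^k\|^2$. Combining these two bounds with $\gamma_k < 1$ and $\beta_k > 0$ forces $\|y^k - x^k\|^2 \leq 0$, so $y^k = x^k$. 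Then \eqref{eq:ipzk} gives $z^k = x^k$, and $\langle F(z^k), x^k - z^k\rangle = 0$, so $x^k \in H_k$ by \eqref{eq;HypSep}.

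For the converse $x^k \in H_k \Rightarrow x^k \in \mathcal{C}^*$, I would argue by contrapositive and assume $y^k \neq x^k$ (otherwise Remark~\ref{rem:stop} already gives $x^k \in \mathcal{C}^*$). Lemma~\ref{Le:ProjProperty}(i) then gives the \emph{strict} inequality $\langle F(x^k), y^k - x^k\rangle \leq \frac{\gamma_k - 1}{\beta_k}\|y^k - x^k\|^2 < 0$. The Armijo condition \eqref{eq:algjk} together with $\rho > 0$ yields $\langle F(z^k), y^k - x^k\rangle \leq \rho \langle F(x^k), y^k - x^k\rangle < 0$. Since $z^k - x^k = \sigma \alpha^{i_k}(y^k - x^k)$ with $\sigma \alpha^{i_k} > 0$, we get $\langle F(z^k), x^k - z^k\rangle = -\sigma \alpha^{i_k}\langle F(z^k), y^k - x^k\rangle > 0$, contradicting $x^k \in H_k$. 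Hence $y^k = x^k$ and Remark~\ref{rem:stop} delivers $x^k \in \mathcal{C}^*$.

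The main subtlety is not a computation but a conceptual one: one must recognize that the Armijo condition transports the sign information from $\langle F(x^k), y^k - x^k\rangle$ (which is controlled by the inexact projection inequality) to $\langle F(z^k), y^k - x^k\rangle$ (which governs the hyperplane $H_k$). The constraint $\bar{\gamma} < 1 - \rho$ from \eqref{eq:bargamma} is not needed here, but the weaker bound $\gamma_k < 1$ is essential in both directions to conclude $y^k = x^k$ from a nonnegative inner product.
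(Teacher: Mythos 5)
Your proof is correct, and for one of the two implications it takes a genuinely different route from the paper. For the direction $x^k\in H_k \Rightarrow x^k\in {\cal C}^*$ the two arguments use the same ingredients --- the Armijo condition \eqref{eq:algjk} to transfer the sign of $\langle F(z^k),y^k-x^k\rangle$ to $\langle F(x^k),y^k-x^k\rangle$, and the inexact-projection inequality of Lemma~\ref{Le:ProjProperty}(i) --- except that the paper runs it forward and closes with Corollary~\ref{cr:equival}(iii)$\Rightarrow$(i), while you run it contrapositively, tracking strict inequalities to conclude $x^k\notin H_k$ whenever $y^k\neq x^k$; these are the same proof in different clothing. The real divergence is in the direction $x^k\in{\cal C}^*\Rightarrow x^k\in H_k$: the paper keeps $z^k$ as a genuine point of ${\cal C}$ and invokes pseudo-monotonicity of $F$ (in its two-point form, $\langle F(x^k),z^k-x^k\rangle\geq 0\Rightarrow\langle F(z^k),z^k-x^k\rangle\geq 0$) to place $x^k$ in $H_k$, whereas you show that $x^k\in{\cal C}^*$ already forces $y^k=x^k$ via the VI inequality at $y=y^k$ combined with Lemma~\ref{Le:ProjProperty}(i), so that $z^k=x^k$ by \eqref{eq:ipzk} and membership in $H_k$ is trivial. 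Your route needs no monotonicity hypothesis at all for this direction (and sidesteps the fact that the paper's standing assumption is only pseudo-monotonicity with respect to ${\cal C}^*$, not the two-point version it quotes), at the cost of exposing that the case $x^k\in{\cal C}^*$ is exactly the degenerate one in which the stopping test of Step~3 fires and $z^k$ collapses onto $x^k$; the paper's argument is marginally shorter and does not rely on that degeneracy. Your closing observations --- that only $\gamma_k<1$ is needed here and that the bound \eqref{eq:bargamma} plays no role in this proposition --- are accurate.
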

    \begin{proof}
    First, we  assume that $x^k\in H(z^k)$. Thus, taking into account   \eqref{eq:ipzk} we conclude  that 
        \[
        0\geq\langle F(z^k),x^k-z^k\rangle=\sigma\alpha^{i_k}\langle F(z^k),x^k-y^k \rangle, 
        \]
     which implies that $\langle F(z^k),y^k-x^k \rangle\geq 0$. Hence, by using \eqref{eq:algjk} and  \eqref{eq:ipzk},  we conclude that $\big{\langle} F(x^k), y^{k}-x^{k}\big{\rangle} \geq 0$. Therefore,  using \eqref{eq:ipyk} together with  Corollary~\ref{cr:equival} we conclude that $x^k\in\mathcal{C}^*$.
     
Conversely,  we  assume that $x^k\in \mathcal{C}^*$.   Since $x^k$ and $y^k$ belong to ${\cal C}$,    it follows from \eqref{eq:ipzk} that $z^k\in \mathcal{C}$ by convexity of $\mathcal{C}$, for all $k\in\mathbb{N}$. Thus, due to $x^k\in \mathcal{C}^*$, we have  $\langle F(x^k),z^k -x^k\rangle\geq 0$. Since $F$ is a pseudo-monotone operator, $\langle F({x}^k),z^k-x^k\rangle\geq 0$ implies $\langle F(z^k),z^k-x^k\rangle\geq 0$, which means that $x^k\in H_k$.
 \end{proof}
    
\begin{proposition}\label{prop:Bfb}
The following inequality holds:
 \begin{equation}\label{eq:tolim}
     \big{\langle} F(x^k),x^k-y^k\big{\rangle}\geq \frac{\max \{\rho,\sqrt{3}-1\}}{\bar{\beta}}\|y^k-x^k\|^2,  \qquad k=1, 2, \ldots.
\end{equation}
\end{proposition}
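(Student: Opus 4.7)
The plan is to derive the inequality directly from the defining inclusion \eqref{eq:ipyk} for $y^k$, namely $y^{k} \in {\cal P}^{\gamma_k}_{\cal C}\big(x^{k}, x^k-\beta_k F(x^k) \big)$, by plugging $y = x^k$ into the inexact projection inequality in Definition~\ref{def:InexactProj}. This gives
$$
\big\langle x^k - \beta_k F(x^k) - y^k,\ x^k - y^k\big\rangle \le \gamma_k \|y^k - x^k\|^2,
$$
which after expansion yields
$$
\|x^k - y^k\|^2 - \beta_k \big\langle F(x^k),\, x^k - y^k\big\rangle \le \gamma_k \|y^k - x^k\|^2.
$$

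Next I would rearrange this to isolate the inner product, obtaining
$$
\big\langle F(x^k),\, x^k - y^k\big\rangle \ge \frac{1-\gamma_k}{\beta_k}\|y^k - x^k\|^2.
$$
Then I would use the two upper bounds on $\beta_k$ and $\gamma_k$: since $\beta_k \le \bar\beta$ and $\gamma_k < \bar\gamma$, the coefficient $(1-\gamma_k)/\beta_k$ is at least $(1-\bar\gamma)/\bar\beta$. The condition \eqref{eq:bargamma}, $\bar\gamma < \min\{1-\rho,\, 2-\sqrt{3}\}$, gives simultaneously $1-\bar\gamma > \rho$ and $1-\bar\gamma > \sqrt{3}-1$, so $1-\bar\gamma \ge \max\{\rho,\sqrt{3}-1\}$, delivering \eqref{eq:tolim}.

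This proof is essentially a direct computation; no step looks obstructive. The only thing worth being careful about is making sure that the two specific bounds $\rho$ and $\sqrt{3}-1$ both emerge from the single hypothesis \eqref{eq:bargamma}, which is exactly the reason the authors bounded $\bar\gamma$ by the minimum of those two quantities in the algorithm setup. The two alternatives inside the $\max$ are there because later convergence arguments will need either bound at different points, so it is convenient to record the stronger of the two here.
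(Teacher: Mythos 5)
Your proof is correct and follows essentially the same route as the paper: substitute $y=x^k$ into the inexact projection inequality from Definition~\ref{def:InexactProj}, rearrange to get $\langle F(x^k),x^k-y^k\rangle\geq \frac{1-\gamma_k}{\beta_k}\|y^k-x^k\|^2$, and then use $\beta_k\leq\bar\beta$ together with \eqref{eq:bargamma} to bound the coefficient below by $\max\{\rho,\sqrt{3}-1\}/\bar\beta$. Your explicit verification that both $1-\bar\gamma>\rho$ and $1-\bar\gamma>\sqrt{3}-1$ follow from the single hypothesis on $\bar\gamma$ is exactly the step the paper leaves implicit.
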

\begin{proof} 
Keeping in mind that  $y^k\in {\cal P}^{\gamma_k}_{\cal C}(x^k,x^k-\beta_k F(x^k))$ and $x^k\in {\cal C}$, from  Definition~\ref{def:InexactProj}  we have 
$$
\langle x^k-\beta_kF(x^k)-y^k,x^k-y^k\rangle\leq \gamma_k\|y^k-x^k\|^2, 
$$
which, after some algebraic manipulation, is rewritten  as follows 
\begin{equation} \label{eq:Bfbpr1}
\langle F(x^k),x^k-y^k \rangle\geq \frac{1-\gamma_k}{\beta_k}\|y^k-x^k\|^2.
\end{equation}
Considering that $0<\beta_k\leq{\bar \beta}$ and $0\leq \gamma_k<\bar{\gamma}<\min\big\{1-\rho, 2-\sqrt{3}\big\}$, we conclude that  
$$
\frac{1-\gamma_k}{\beta_k}\geq \frac{\max \{\rho,\sqrt{3}-1\}}{\bar{\beta}}.
$$
The combination of \eqref{eq:Bfbpr1} with the previous inequality yields the desired inequality.
\end{proof}
Next, we are going to establish two important inequalities to show  the convergence of Algorithm~\ref{Alg:ExtraGradMethodLS}.
 \begin{lemma}\label{le:FejerPropertyLS}
Let ${\cal C}\subset {\mathbb R}^n$  be  a nonempty and closed  convex set and $(x^k)_{k\in{\mathbb N}}$ the sequence generated by Algorithm~\ref{Alg:ExtraGradMethodLS}. Assume that   $F: {\mathbb R}^n\to {\mathbb R}^n$ is  a    pseudo monotone  operator  on  ${\cal C}$ with respect to the solution set  ${\cal C}^*$ of  problem~\eqref{eq:mp} and  $x^k\notin {\cal C}^*$, for all $k=1,2,\ldots$ Then, for any $x^*\in {\cal C}^*$, there holds:
\begin{equation} \label{eq:algf14bg}
\| x^{k+1}-x^{*} \|^{2} \leq \|x^k-x^{*}\|^{2}-\frac{1}{(1-{\bar \gamma})^2}\left({\bar \gamma}^2-4{\bar \gamma}+1\right)\lambda_k^2 \|F(z^k)\|^2, \qquad k=1, 2, \ldots.
\end{equation}
As a consequence,  $(x^k)_{k\in{\mathbb N}}$ is Fej\'er convergent to ${\cal C}^*$, i.e.,  for any $x^*\in {\cal C}^*$  there holds
\begin{equation} \label{eq:FejerPropertyLs}
\| x^{k+1}-x^{*} \| \leq \|x^k-x^{*}\|, \qquad k=1, 2, \ldots.
\end{equation} 
\end{lemma}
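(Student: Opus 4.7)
The plan is to apply Proposition~\ref{pr:snonexp} to the inexact projection $x^{k+1} \in {\cal P}^{\gamma_k}_{\cal C}(x^k, x^k-\lambda_k F(z^k))$, using the fact that $x^* = {\cal P}_{\cal C}(x^*)$ since $x^*\in {\cal C}^*\subset {\cal C}$. Taking $v = x^k-\lambda_k F(z^k)$, $\bar v = x^*$, $u = x^k$, $w = x^{k+1}$, $\bar w = x^*$, $\gamma = \gamma_k$ in Proposition~\ref{pr:snonexp} produces an inequality of the form
\[
\|x^{k+1}-x^*\|^2 \le \|x^k-x^*-\lambda_k F(z^k)\|^2 - \|x^k-x^{k+1}-\lambda_k F(z^k)\|^2 + 2\gamma_k\|x^{k+1}-x^k\|^2.
\]
Expanding the two squared norms on the right and cancelling the $\lambda_k^2\|F(z^k)\|^2$ terms leaves an expression in $\|x^k-x^*\|^2$, $\|x^k-x^{k+1}\|^2$, and the cross term $2\lambda_k\langle F(z^k), x^*-x^{k+1}\rangle$.

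Next, I would control this cross term. Since $z^k\in {\cal C}$ (as a convex combination of $x^k,y^k\in {\cal C}$) and $F$ is pseudo-monotone on ${\cal C}$ with respect to ${\cal C}^*$, one has $\langle F(z^k), z^k-x^*\rangle \ge 0$, so $x^*\in H_k$. Splitting $\langle F(z^k), x^*-x^{k+1}\rangle = \langle F(z^k), x^*-z^k\rangle + \langle F(z^k), z^k-x^{k+1}\rangle$ and using $\langle F(z^k), x^*-z^k\rangle \le 0$ together with the identity $\lambda_k\|F(z^k)\|^2 = \langle F(z^k), x^k-z^k\rangle$ from \eqref{eq:etakfs}, this cross term reduces to $-2\lambda_k^2\|F(z^k)\|^2 + 2\lambda_k\langle F(z^k), x^k-x^{k+1}\rangle$. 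Applying $2ab \le a^2+b^2$ on the last inner product absorbs $\|x^k-x^{k+1}\|^2$ and leaves
\[
\|x^{k+1}-x^*\|^2 \le \|x^k-x^*\|^2 - \lambda_k^2\|F(z^k)\|^2 + 2\gamma_k\|x^{k+1}-x^k\|^2.
\]

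To finish, I would bound $\|x^{k+1}-x^k\|$ via item (ii) of Lemma~\ref{Le:ProjProperty} applied with $w(\alpha)=x^{k+1}$, $z=z^k$ and $\alpha=\lambda_k$, yielding $\|x^{k+1}-x^k\|^2 \le \lambda_k^2\|F(z^k)\|^2/(1-\gamma_k)^2$. Substituting and collecting the coefficient of $\lambda_k^2\|F(z^k)\|^2$ gives the algebraic factor
\[
-1 + \frac{2\gamma_k}{(1-\gamma_k)^2} = -\frac{\gamma_k^2-4\gamma_k+1}{(1-\gamma_k)^2}.
\]
Since the map $t\mapsto (t^2-4t+1)/(1-t)^2$ is decreasing on $[0,1)$ and $\gamma_k\le \bar\gamma$, this factor is dominated by its value at $\bar\gamma$, which yields \eqref{eq:algf14bg}.

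The Fej\'er consequence \eqref{eq:FejerPropertyLs} is then immediate from the constraint $\bar\gamma<2-\sqrt{3}$, which is precisely the condition ensuring $\bar\gamma^2-4\bar\gamma+1>0$ so that the subtracted term is nonnegative. The main delicacy will be the algebraic bookkeeping that turns the Proposition~\ref{pr:snonexp} bound into a clean expression in $\lambda_k^2\|F(z^k)\|^2$; the conceptual content, namely $x^*\in H_k$ via pseudo-monotonicity and the identity $\lambda_k\|F(z^k)\|^2=\langle F(z^k), x^k-z^k\rangle$, is what makes the whole calculation collapse.
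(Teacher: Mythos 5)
Your proof is correct and reaches the paper's key intermediate inequality \eqref{eq:alg15}, namely $\|x^{k+1}-x^*\|^2\le\|x^k-x^*\|^2-\lambda_k^2\|F(z^k)\|^2+2\gamma_k\|x^{k+1}-x^k\|^2$, by a mildly different route in the middle step. After applying Proposition~\ref{pr:snonexp} exactly as you do, the paper simply drops the nonpositive middle term and bounds $\|x^k-\lambda_k F(z^k)-x^*\|^2$ geometrically: it identifies $x^k-\lambda_k F(z^k)={\PJ}_{H_k}(x^k)$ via Proposition~\ref{prop:0} (which needs $x^k\notin H_k$, supplied by Proposition~\ref{prop:hk} and the hypothesis $x^k\notin{\cal C}^*$), notes $x^*\in H_k$ by pseudo-monotonicity, and invokes the Pythagorean property of the exact projection onto the half-space (Lemma~\ref{le:projeccion}(ii)). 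You instead keep the middle term, expand both squares, and estimate the cross term directly using pseudo-monotonicity, the definition of $\lambda_k$, and Young's inequality; the computations are equivalent and land on the same display, after which the two arguments coincide (Lemma~\ref{Le:ProjProperty}(ii) and the same algebra). You also correctly state that $t\mapsto(t^2-4t+1)/(1-t)^2$ is \emph{decreasing} on $[0,1)$ --- the paper's text says ``increasing'', which is a typo, since decreasing is exactly what the passage from $\gamma_k$ to $\bar\gamma$ requires. The one point you should make explicit is that $\lambda_k>0$: multiplying $\langle F(z^k),x^*-z^k\rangle\le 0$ by $2\lambda_k$ requires this sign, and it follows from \eqref{eq:algjk}, \eqref{eq:ipzk} and Proposition~\ref{prop:Bfb} (equivalently, from $x^k\notin H_k$); this is precisely where the standing assumption $x^k\notin{\cal C}^*$ enters your argument, so it should not be left implicit.
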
 
\begin{proof}   
Take $x^*\in \mathcal{C}^*$.   Denotes the boundary of  $H_k$  by $L_k$, which is given by 
\begin{equation} \label{eq;hpsb}
L_k:=\{x\in {\mathbb R}^n:~\langle F(z^k),x-z^k \rangle=0\}.
\end{equation}
Since   $x^k$ and $y^k$ belong to the convex set  ${\cal C}$,   we obtain from  \eqref{eq:ipzk} that $z^k\in \mathcal{C}$, for all $k\in\mathbb{N}$. 
Thus,  due to   $F$ be  pseudo monotone on  ${\cal C}$ with respect to the solution set  ${\cal C}^*$ of  problem~\eqref{eq:mp}, we have 
\begin{equation*} 
\big\langle F(z^k),  x^k-{x^*} \big\rangle\geq 0.
\end{equation*}
The last inequality and the definition of $H_k$ in \eqref{eq;HypSep} imply  that  $x^*\in H_k$. Thus, we conclude that 
\begin{equation}\label{eq:alg12}
{\PJ}_{H_k}(x^*)=x^*.
\end{equation} 
We know as well that applying Proposition~\ref{pr:snonexp} with   $v=x^k-\lambda_k F(z^k) $, $u=x^{k}$, $\gamma=\gamma_k$, ${w}=x^{k+1}$ and  ${\bar w}={\bar v}=x^*$ we obtain that 
\begin{equation*}
\|x^{k+1}-x^{*}\|^{2} \leq \|x^k-\lambda_k F(z^k) -x^{*}\|^{2}- \|(x^k-\lambda_k F(z^k)-x^{*})-({x^{k+1}}-x^*)\|^2+2\gamma_k\|x^{k+1}-x^k \|^2,
\end{equation*}
which implies that 
\begin{equation}\label{eq:alg13}
\|x^{k+1}-x^{*}\|^{2} \leq \|x^k-\lambda_k F(z^k) -x^{*}\|^{2}+2\gamma_k\|x^{k+1}-x^k \|^2.
\end{equation}
 Using   \eqref{eq:alg12}  and the item $(ii)$ of Lemma~\ref{le:projeccion} we have 
\begin{equation*} 
\|{\PJ}_{H_k}(x^k)- x^*\|^{2}\leq \|x^k-x^{*}\|^{2}-\|{\PJ}_{H_k}(x^k)-x^k\|^2.
\end{equation*}
Since  $x^k\notin {\cal C}^*$,  Proposition~\ref{prop:hk}  implies that  $x^k \notin H_k$. Hence, the last inequality  together with the first equality in   \eqref{eq:etakc} yield
\begin{equation*} 
\|x^k-\lambda_k F(z^k)- x^*\|^{2}\leq \|x^k-x^{*}\|^{2}-\lambda_k^2 \|F(z^k)\|^2.
\end{equation*}
As a result of combining the last inequality with \eqref{eq:alg13}, we arrive at the conclusion that
\begin{equation}\label{eq:alg15}
\| x^{k+1}-x^{*} \|^{2} \leq \|x^k-x^{*}\|^{2}-\lambda_k^2 \|F(z^k)\|^2+2\gamma_k\|x^{k+1}-x^k\|^2.
\end{equation}
Since $x^{k+1} \in {\cal P}^{\gamma_k}_{\cal C}\big(x^{k}, x^k-\lambda_k F(z^k) \big)$,  applying item (ii) of Lemma~\ref{Le:ProjProperty} with $\gamma=\gamma_k$, $x=x^k$, $z=z^k$, $\alpha=\lambda_k$ and $w(\alpha)=x^{k+1}$ we obtain that 
$$
\| x^{k+1}-x^k\|\leq\dfrac{\lambda_k}{1-\gamma_k}\|F(z^k)\|,
$$
which combined with \eqref{eq:alg15} yields
$$
\| x^{k+1}-x^{*} \|^{2} \leq \|x^k-x^{*}\|^{2}-\lambda_k^2 \|F(z^k)\|^2+2\gamma_k\dfrac{\lambda_k^2}{(1-\gamma_k)^2}\|F(z^k)\|^2, 
$$
or equivalently, 
\begin{equation} \label{eq:algf14}
\| x^{k+1}-x^{*} \|^{2} \leq \|x^k-x^{*}\|^{2}-\frac{1}{(1-\gamma_k)^2}\left(\gamma_k^2-4\gamma_k+1\right)\lambda_k^2 \|F(z^k)\|^2.
\end{equation}
Since $0<\bar{\gamma}< 2-\sqrt{3}$,  the function $(0, {\bar \gamma}] \mapsto \left(\gamma_k^2-4\gamma_k+1\right)/(1-\gamma_k)^2$ is increasing  and positive, the inequality \eqref{eq:algf14bg}  follows \eqref{eq:algf14}.  As a consequence,  \eqref{eq:FejerPropertyLs} follows from \eqref{eq:algf14bg} and the proof is  complete.
 \end{proof}   
 The following theorem shows the most important finding about Algorithm~\ref{Alg:ExtraGradMethodLS}.  It establishes the convergence of the sequence towards a solution of VIP(F,${\cal C}$).
\begin{theorem}\label{th:FejerPropertyNLS}
Let ${\cal C}\subset {\mathbb R}^n$  be  a nonempty and closed  convex set and $(x^k)_{k\in{\mathbb N}}$ the sequence generated by Algorithm~\ref{Alg:ExtraGradMethod}. Assume that   $F: {\mathbb R}^n\to {\mathbb R}^n$ is  a    pseudo monotone  operator on  ${\cal C}$ with respect to the solution set  ${\cal C}^*$ of  problem~\eqref{eq:mp}.  If ${\cal C}^*\neq \varnothing$, then Algorithm~\ref{Alg:ExtraGradMethod} either ends at iteration $k$, in which case $x^k\in {\cal C}^*$, or generates an infinite sequence $(x^k)_{k\in{\mathbb N}}$  that converges to a point belonging to ${\cal C}^*$.
\end{theorem}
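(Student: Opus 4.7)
The plan is as follows. If the algorithm terminates at some finite iteration $k$, then $y^k=x^k$ and Remark~\ref{rem:stop} immediately gives $x^k\in{\cal C}^*$. Otherwise the sequence $(x^k)_{k\in\mathbb{N}}$ is infinite, and Lemma~\ref{le:FejerPropertyLS} shows it is Fej\'er convergent to ${\cal C}^*$, hence bounded by Lemma~\ref{le:fejer}(i). Telescoping the stronger inequality \eqref{eq:algf14bg}, whose coefficient is positive because $0<\bar{\gamma}<2-\sqrt{3}$, I obtain
\[
\sum_{k\in\mathbb{N}}\lambda_k^2\|F(z^k)\|^2<+\infty,
\]
so $\lambda_k\|F(z^k)\|\to 0$. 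Boundedness of $(x^k)_{k\in\mathbb{N}}$, continuity of $F$, the bound $\|y^k-x^k\|\leq\beta_k\|F(x^k)\|/(1-\gamma_k)$ from Corollary~\ref{cr:ProjPropertyii}(i), and $\beta_k\leq\bar{\beta}$ then imply that $(y^k)$, $(z^k)$, and $(F(z^k))$ are all bounded. Multiplying the previous limit by the bounded $\|F(z^k)\|$ yields $\lambda_k\|F(z^k)\|^2\to 0$. Rewriting this using $\lambda_k\|F(z^k)\|^2=\sigma\alpha^{i_k}\langle F(z^k),x^k-y^k\rangle$ from \eqref{eq:etakfs}--\eqref{eq:ipzk}, combined with the lower bound of Proposition~\ref{prop:Bfb} applied through the line-search condition \eqref{eq:algjk}, I derive $\sigma\alpha^{i_k}\|y^k-x^k\|^2\to 0$.

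Now I fix an arbitrary cluster point $\bar{x}$ of $(x^k)_{k\in\mathbb{N}}$ with a subsequence $x^{k_j}\to\bar{x}$, and extract further subsequences so that $y^{k_j}\to\bar{y}$ and $\beta_{k_j}\to\beta^*\in[\hat{\beta},\bar{\beta}]$. Two cases arise. If $\|y^{k_j}-x^{k_j}\|\to 0$, then $\bar{y}=\bar{x}$, and passing to the limit in the defining inexact-projection inequality for $y^{k_j}$ from Definition~\ref{def:InexactProj},
\[
\langle x^{k_j}-\beta_{k_j}F(x^{k_j})-y^{k_j},\,y-y^{k_j}\rangle\leq \gamma_{k_j}\|y^{k_j}-x^{k_j}\|^2,\qquad\forall\,y\in{\cal C},
\]
and using $\gamma_{k_j}<\bar{\gamma}$, $\beta^*>0$, and continuity of $F$, produces $\langle F(\bar{x}),y-\bar{x}\rangle\geq 0$ for all $y\in{\cal C}$, i.e., $\bar{x}\in{\cal C}^*$. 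Otherwise $\|y^{k_j}-x^{k_j}\|\geq\epsilon>0$ along a sub-subsequence; then $\sigma\alpha^{i_k}\|y^k-x^k\|^2\to 0$ forces $\alpha^{i_{k_j}}\to 0$, i.e., $i_{k_j}\to+\infty$. The failure of the line-search at $i=i_{k_j}-1$ gives
\[
\langle F(x^{k_j}+\sigma\alpha^{i_{k_j}-1}(y^{k_j}-x^{k_j})),\,y^{k_j}-x^{k_j}\rangle>\rho\,\langle F(x^{k_j}),\,y^{k_j}-x^{k_j}\rangle,
\]
and since $\sigma\alpha^{i_{k_j}-1}\|y^{k_j}-x^{k_j}\|\to 0$ the left argument of $F$ converges to $\bar{x}$; taking limits and using $0<\rho<1$ yields $\langle F(\bar{x}),\bar{y}-\bar{x}\rangle\geq 0$. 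On the other hand, the limit of Proposition~\ref{prop:Bfb} delivers $\langle F(\bar{x}),\bar{x}-\bar{y}\rangle\geq c\|\bar{y}-\bar{x}\|^2$ with $c>0$; combining these forces $\bar{y}=\bar{x}$, contradicting $\epsilon>0$. So only the first case occurs, and every cluster point of $(x^k)_{k\in\mathbb{N}}$ lies in ${\cal C}^*$.

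To conclude, Lemma~\ref{le:fejer}(ii) applied to $(x^k)_{k\in\mathbb{N}}$, which is Fej\'er and hence quasi-Fej\'er to ${\cal C}^*$, with cluster point $\bar{x}\in{\cal C}^*$ gives convergence of the entire sequence to $\bar{x}$. The main obstacle I anticipate is the second case in the cluster-point analysis: ruling out that the line-search indices $i_k$ blow up while $\|y^k-x^k\|$ stays bounded away from zero. This is exactly what requires combining, in the limit, the failed Armijo inequality at step $i_{k_j}-1$ with the coercive lower bound of Proposition~\ref{prop:Bfb}; the remaining arguments are essentially bookkeeping built on the Fej\'er inequality \eqref{eq:algf14bg}.
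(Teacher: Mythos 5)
Your proposal is correct and follows essentially the same route as the paper: termination via Remark~\ref{rem:stop}, Fej\'er monotonicity from Lemma~\ref{le:FejerPropertyLS} with telescoping of \eqref{eq:algf14bg}, boundedness of $(y^k)$, $(z^k)$, $(F(z^k))$, and the ``one step back'' failed Armijo inequality combined with the coercive bound of Proposition~\ref{prop:Bfb} to show every cluster point lies in ${\cal C}^*$, finishing with Lemma~\ref{le:fejer}(ii). The only cosmetic difference is that you split cases on whether $\|y^{k_j}-x^{k_j}\|\to 0$ rather than on whether $\bar{\alpha}>0$ or $\bar{\alpha}=0$ as the paper does; the two decompositions use the same ingredients and are logically interchangeable.
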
 
\begin{proof}
First, we assume that   Algorithm~\ref{Alg:ExtraGradMethod} ends at iteration $k$. In this case,  we have $y^{k}=x^{k}$ and Remark~\ref{rem:stop} implies that  $x^k\in {\cal C}^*$. Now, we assume that the sequence $(x^k)_{k\in{\mathbb N}}$ is infinite. Hence,   we have  $x^k\notin {\cal C}^*$, for all $k=1, 2, \ldots$.

Since   $(x^k)_{k\in{\mathbb N}}$ satisfies \eqref{eq:FejerPropertyLs} in Lemma~\ref{le:FejerPropertyLS},  it also satisfies Definition~\ref{def:fejer}. Thus, due  to ${\cal C}^*\neq \varnothing$, it follows from item $(ii)$ of Lemma~\ref{le:fejer} that $(x^k)_{k\in{\mathbb N}}$  is bounded.   Using  \eqref{eq:algf14bg} of Lemma~\ref{le:FejerPropertyLS} we have 
\begin{equation} \label{eq:algf14bgls}
0<\frac{1}{(1-{\bar \gamma})^2}\left({\bar \gamma}^2-4{\bar \gamma}+1\right)\lambda_k^2 \|F(z^k)\|^2 \leq \|x^k-x^{*}\|^{2}-\| x^{k+1}-x^{*} \|^{2}, \qquad k=1, 2, \ldots.
\end{equation}
On the other hand, \eqref{eq:FejerPropertyLs} implies that the sequence  $(\|x^k-x^*\|)_{k\in{\mathbb N}}$ is monotone non-increasing and bounded from below. Thus,  $(\|x^k-x^*\|)_{k\in{\mathbb N}}$ converges. Hence, taking  the limit in \eqref{eq:algf14bgls} as $k$ tends to infinity, we have 
$
\lim_{k\to +\infty}\lambda_k \|F(z^k)\|=0.
$
And, in view of  \eqref{eq:etakfs} we conclude that 
\begin{equation} \label{eq:etakmth1}
		\lim_{k\to +\infty}\lambda_k \|F(z^k)\|=\lim_{k\to +\infty} \frac{1}{\|F(z^{k})\|}\big{\langle} F( z^k) ,z^{k}-x^k \big{\rangle}=0.
\end{equation}
Since $y^{k} \in {\cal P}^{\gamma_k}_{\cal C}\big(x^{k}, x^k-\beta_k F(x^k) \big)$,  applying item $(i)$ of Corollary~\ref{cr:ProjPropertyii} with $\gamma=\gamma_k$, $x=x^k$, $\alpha=\beta_k$ and $y=y^{k}$ we obtain that 
$$
\|y^{k}-x^{k}\|\leq\dfrac{\beta_k}{1-\beta_k}\|F(x^k)\|.
$$
Because  $0< {\hat \beta}< \beta_k<{\bar \beta}$, $0 \leq \gamma_k < \bar{\gamma}$ and $(x^k)_{k\in{\mathbb N}}$  is bounded, the latter  inequality implies that $(y^k)_{k\in{\mathbb N}}$  is bounded. Hence, it follows from \eqref{eq:ipzk} that $(z^k)_{k\in{\mathbb N}}$  is also bounded. In addition, due to $F$ be continuous, we conclude that   $(F(z^k))_{k\in{\mathbb N}}$ is bounded.  Thus,  from \eqref{eq:etakmth1}  we have 
$
		\lim_{k\to +\infty} \big{\langle} F( z^k) ,z^{k}-x^k \big{\rangle}=0.
$
Therefore, it follows from the last equality and \eqref{eq:ipzk} that 
\begin{equation} \label{eq:etakmth2}
		\lim_{k\to +\infty} \sigma \alpha^{i_k}\big{\langle} F( z^k) ,y^{k}-x^k \big{\rangle}=0.
\end{equation}
Since  the sequences $(x^k)_{k\in{\mathbb N}}\subset {\cal C}$, $(y^k)_{k\in{\mathbb N}}\subset {\cal C}$ and  $(z^k)_{k\in{\mathbb N}}\subset {\cal C}$ are  bounded, we can take  subsequences  $(x^{k_j})_{j\in{\mathbb N}}$,  $(y^{k_j})_{j\in{\mathbb N}}$ and  $(z^{k_j})_{j\in{\mathbb N}}$ of them, respectively,   and ${\bar x}\in {\cal C}$,  ${\bar y}\in {\cal C}$ and  ${\bar z}\in {\cal C}$  such that $\lim_{j\to+\infty}x^{k_j}={\bar x}$, $\lim_{j\to+\infty}y^{k_j}={\bar y}$ and $\lim_{j\to+\infty}z^{k_j}={\bar z}$. Furthermore,  due to  $0<\alpha<1$, $0 \leq \gamma_k < \bar{\gamma}$ and $0< {\hat \beta}< \beta_k<{\bar \beta}$ for all $k\in{\mathbb N}$, we can also assume without loss of generality that $\lim_{j \to +\infty}  \alpha^{i_{k_j}} = \bar{\alpha} \in [0,1]$, $\lim_{j \to +\infty} \gamma_{k_j} =  {\hat \gamma} \leq {\bar \gamma}$ and  $\lim_{j \to +\infty} \beta_{k_j} ={\tilde \beta}\geq {\hat \beta}$. We have two possibilities for $\bar{\alpha}$: $\bar{\alpha} > 0$ or $\bar{\alpha} = 0$.

 First we assume that $\bar{\alpha} > 0$. In this case, it follows from \eqref{eq:etakmth2} that 
\begin{equation*} 
	0=\lim_{j\to +\infty} { \sigma \alpha^{i_{k_j}}}\big{\langle} F( z^{k_j}) ,y^{k_j}-x^{k_j} \big{\rangle}=\sigma{\bar \alpha}\big{\langle} F({\bar z}) ,{\bar y}-{\bar x} \big{\rangle}.
\end{equation*}
Because we are assuming that ${\bar \alpha} > 0$,  we conclude that $\langle F({\bar z}) ,{\bar y}-{\bar x} \rangle=0$. Using,  \eqref{eq:algjk} and \eqref{eq:ipzk}  together with Proposition~\ref{prop:Bfb} we conclude that 

$$
\big{\langle} F(z^{k_j} ) ,y^{k_j}-x^{k_j} \big{\rangle} \leq \rho \big{\langle} F(x^{k_j} ) ,y^{k_j}-x^{k_j} \big{\rangle}\leq  -\rho \frac{\max \{\rho,\sqrt{3}-1\}}{\bar{\beta}}\|y^{k_j}-x^{k_j} \|^2.
$$ 
Taking  the limit in the previous inequality as $j$ tending to infinity and  taking into account  $\lim_{j\to+\infty}x^{k_j}={\bar x}$, $\lim_{j\to+\infty}y^{k_j}={\bar y}$ and  $\langle F({\bar z}) ,{\bar y}-{\bar x} \rangle=0$, we conclude that  ${\bar y}={\bar x}$. Considering that  $y^{k} \in {\cal P}^{\gamma_k}_{\cal C}\big(x^{k}, x^k-\beta_k F(x^k) \big)$,  it follows from  Definition~\ref{def:InexactProj} that 
\begin{equation*} 
	\big\langle x^{k_j}-\beta_{k_j} F(x^{k_j})-y^{k_j}, y-y^{k_j} \big\rangle \leq  \gamma_{k_j} \|y^{k_j}-x^{k_j}\|^2, \qquad \quad ~\forall y \in {\cal C}.
\end{equation*}
Thus, taking  the limit in the previous inequality as $j$ tending to infinity, using that  ${\bar y}={\bar x}$ and  $\lim_{j \to +\infty} \beta_{k_j} ={\tilde \beta}>0$, we obtain  that 
$$
\left\langle F({\bar x}),  y-{\bar x} \right\rangle\geq 0 , \qquad \forall~y\in {\cal C}, 
$$
which implies that ${\bar x}\in {\cal C}^*$. Since ${\bar x}$ is a cluster point of $(x_k)_{k\in\mathbb{N}}$ and the sequence $(x_k)_{k\in\mathbb{N}}$  is Fej\'er convergent to $ {\cal C}^*$, item (ii) of Lemma~\ref{le:fejer} implies that $\lim_{k\to +\infty}x_{k}={\bar x}$.

Now, let  us assume   that $\bar{\alpha} = 0$. We proceed to prove that in this case,  $(x^k)_{k\in{\mathbb N}}$ likewise converges to some point belonging to the set ${\cal C}^*$. For that,  we consider the auxiliary sequence $({\hat z}_k)_{k\in\mathbb{N}}$  defined by 
\begin{equation} \label{eq:ipzkpf}
					{\hat z}^{k}:=x^{k}+\sigma \frac{\alpha^{i_k}}{\alpha}(y^{k}-x^k) , \qquad  \qquad  k=1,2, \ldots, 
\end{equation}
where $i_{k}$ is defined in \eqref{eq:algjk}. Since $\lim_{j\to+\infty}x^{k_j}={\bar x}$, $\lim_{j\to+\infty}y^{k_j}={\bar y}$ and $\lim_{j \to +\infty} \alpha_{k_j} = \bar{\alpha}=0$, it follows from \eqref{eq:ipzkpf}  that $\lim_{j\to +\infty} {\hat z}^{k_j}={\bar x}$. The definition of ${i_{k_j}}$ in  \eqref{eq:algjk} implies that 
\begin{equation}  \label{eq:onls}
\big{\langle} F {\big(}{\hat z}^{k_j} {\big )},y^{k_j}-x^{k_j}\big{\rangle} > \rho  \big{\langle} F(x^{k_j}), y^{k_j}-x^{k_j}\big{\rangle}.
\end{equation}
Thus,    \eqref{eq:onls}  implies that  $\big{\langle} F({\bar x}) ,{\bar y}-{\bar x} \big{\rangle} \geq \rho \big{\langle} F({\bar x}) ,{\bar y}-{\bar x} \big{\rangle}$, and since $\rho<1$ we conclude that  
\begin{equation} \label{eq:opjkpfn}
\big{\langle} F({\bar x}) ,{\bar y}-{\bar x} \big{\rangle} \geq 0.
\end{equation}
 Given that   $y^{k} \in {\cal P}^{\gamma_k}_{\cal C}\big(x^{k}, x^k-\alpha_k F(x^k) \big)$,  it follows from  Definition~\ref{def:InexactProj} that 
\begin{equation*} 
	\big\langle x^{k_j}-\beta_{k_j} F(x^{k_j})-y^{k_j}, y-y^{k_j} \big\rangle \leq  \gamma_{k_j} \|y^{k_j}-x^{k_j}\|^2, \qquad \quad ~\forall y \in {\cal C}.
\end{equation*}
Taking  the limit in the previous inequality as $j$ going to infinity, and using that  ${\hat \gamma} \leq {\bar \gamma}$, we have
\begin{equation} \label{eq:Projwmthsc} 
 \langle {\bar x}-{\bar y}, y-{\bar y}\rangle -{\tilde \beta} \big\langle F({\bar x}), y-{\bar y}\big\rangle \leq  {\bar \gamma} \|{\bar y}-{\bar x}\|^2, \qquad \quad ~\forall y \in {\cal C}.
\end{equation}
Substituting  $y \in {\cal C}$ for   ${\bar x}\in {\cal C}$ in the last inequality, after some algebraic manipulations   yields 
\begin{equation} \label{eq:Projthscy} 
 {\tilde \beta} \big\langle F({\bar x}), {\bar y}-{\bar x}\big\rangle\leq ({\bar \gamma}-1) \|{\bar y}-{\bar x}\|^2.
\end{equation}
Combining \eqref{eq:opjkpfn} with the latter inequality we obtain  that $(1-{\bar \gamma}) \|{\bar y}-{\bar x}\|^2\leq 0$.  Hence,  because  \eqref{eq:bargamma} implies that $1-{\bar \gamma}>0$,  we conclude that ${\bar y}={\bar x}$. Therefore,  due to ${\tilde \beta}>0$ and ${\bar y}={\bar x}$, it follows from \eqref{eq:Projwmthsc}  that 
$$
\big\langle F({\bar x}), y-{\bar x}\big\rangle \geq  0, \qquad \quad ~\forall y \in {\cal C},
$$
which also implies that ${\bar x}\in {\cal C}^*$. Again, because  ${\bar x}$ is a cluster point of $(x_k)_{k\in\mathbb{N}}$ and the sequence $(x_k)_{k\in\mathbb{N}}$  is Fej\'er convergent to $ {\cal C}^*$, item (ii) of Lemma~\ref{le:fejer} implies that $\lim_{k\to +\infty}x_{k}={\bar x}$ and the proof is concluded.
\end{proof}       

\section{Numerical Results}

To demonstrate the behavior of the proposed algorithms, we present the results of numerical experiments. We implemented Algorithms~\ref{Alg:ExtraGradMethod} and~\ref{Alg:ExtraGradMethodLS} and applied them to test problems adapted from~\cite{buramillan}. For that,  we slightly modified the feasible set ${\cal C}$ to be the unit ball in $\R^d$ with the $p$-norm, defined as   
\begin{equation} \label{eq:np}
B^d_{p}:=\Big\{x:=(x_1, \ldots, x_d)\in \R^d:~ \|x\|_{p}:=\left(\sum_{i=1}^d x_i^{p}\right)^{1/p}\leq 1\Big\}.
\end{equation}
Projections onto this set are more challenging than those on the original problems' feasible sets. The algorithms were implemented in the Julia language. The code can be obtained from~\url{https://github.com/ugonj/extragradient}.

Given the necessity to compute an inexact projection, we now introduce an algorithm designed precisely for this purpose—the well-established classical Frank-Wolfe algorithm (FW algorithm) detailed, for instance, in \cite{BeckTeboulle2004}. Notably, the FW algorithm we present exhibits versatility, enabling the computation of both inexact and exact projections. To achieve the latter, one simply needs to set a tolerance for the exact projection, ensuring it is significantly smaller than the tolerance employed for the inexact projection. For a more thorough guide on computing inexact projections, please consult \cite{LemesPrudente2022}.  To present the FW algorithm, we assume the existence of a linear optimization oracle (referred to as LO oracle) capable of minimizing linear functions over the constraint set ${\cal C}$. The FW algorithm is formally defined to compute a feasible inexact projection $w\in {\cal C}$ of a point $v\in {\mathbb R}^n$ onto a compact convex set ${\cal C}$ with respect to a reference point ${u}\in {\cal C}$ and a forcing parameter $\gamma \geq 0$ as follows:
\begin{algorithm}[h]
\begin{footnotesize}
	\begin{algorithmic}[1]
	\State {Take $\gamma>0$,   ${v}, u\in {\mathbb R}^n$ and ${w_0}\in {\cal C}$. Set  $\ell=0$.}
	\State { Use a LO oracle to compute an optimal solution $z_\ell$ and the optimal value $s_{\ell}^*$ as
\begin{equation}\label{eq:CondGC}
z_\ell := \arg\min_{y \in  C} \,\langle w_\ell-v, ~y-w_\ell\rangle,  \qquad s_{\ell}^*:=\langle  w_\ell-v, ~z_\ell-w_\ell \rangle.
\end{equation}}
	 \State {If $-s^*_{\ell}\leq  \gamma \|{w_\ell}-{u}\|^2$, then {\bf stop}, and  set ${{w}}:=w_\ell$. Otherwise,  set 
\begin{equation}\label{eq:stepsize}
w_{\ell+1}:=w_\ell+ \alpha_\ell(z_\ell-w_\ell), \qquad {\alpha}_\ell: =\min\Big\{1, \frac{-s^*_{\ell}}{\|z_\ell-w_\ell\|^2}  \Big\}.
\end{equation}}
       \State { Set $\ell\gets \ell+1$, and go to Step 2.}
	\end{algorithmic}
\end{footnotesize}
\caption{{FW algorithm} $w\in {\cal P}^{\gamma}_{\cal C}(u, v)$}	
\label{FW:algorithm}
\end{algorithm}

Now, let us show  that the {FW algorithm} described above is capable of computing the feasible inexact projection $w\in {\cal C}$ of a point $v\in {\mathbb R}^n$ onto a compact convex set ${\cal C}$ concerning a reference point ${u}\in {\cal C}$ and a forcing parameter $\gamma \geq 0$. To achieve this, consider $v \in \mathbb{R}^n$ and $\psi_v: \mathbb{R}^n \to \mathbb{R}$ defined by $\psi_v(y) := \|y - v\|^2/2$, with ${\cal C} \subset \mathbb{R}^n$ being a convex compact set. In this case, \eqref{eq:CondGC} is equivalent to $s_{\ell}^* := \min_{y \in \mathcal{C}} \langle \psi_v'(w_\ell),~y-w_\ell\rangle$. Since  $\psi_v$ is convex, we have $\psi_v(y) \geq \psi_v(w_\ell) + \langle \psi_v'(w_\ell),~y-w_\ell\rangle \geq \psi_v(w_\ell) + s_{\ell}^*$ for all $y \in {\cal C}$. Set $w_* := \arg \min_{y \in \mathcal{C}} \psi_v(y)$ and $\psi^* := \psi_v(w_*)$. Letting $y = w_*$ in the last inequality, we have $\psi_v(w_\ell) \geq \psi^* \geq \psi_v(w_\ell) + s_{\ell}^*$, implying that $s_{\ell}^* \leq 0$. Thus, $-s_{\ell}^* = \langle v-w_\ell, ~z_\ell-w_\ell \rangle \geq 0 \geq \langle v-w_*, ~y-w_* \rangle$, for all $y \in {\cal C}$. Therefore, we can set the stopping criterion to $-s_{\ell}^* \leq \gamma \|{w_\ell}-{u}\|^2$, ensuring that the FW algorithm will terminate after finitely many iterations. When the FW algorithm computes $w_\ell \in \mathcal{C}$ satisfying $-s_{\ell}^* \leq \gamma \|{w_\ell}-{u}\|^2$, the method terminates. Otherwise, it computes the stepsize $\alpha_\ell = \arg\min_{\alpha \in [0,1]} \psi_v(w_\ell + \alpha(y_\ell - w_\ell))$ using exact line search. Since $z_\ell$, $w_\ell \in \mathcal{C}$ and $\mathcal{C}$ is convex, we conclude from \eqref{eq:stepsize} that $w_{\ell+1} \in \mathcal{C}$; thus, the FW algorithm generates a sequence in $\mathcal{C}$. Finally, \eqref{eq:CondGC} implies that $\langle v-w_\ell, ~y-w_\ell\rangle \leq -s_{\ell}^*$ for all $z \in \mathcal{C}$. Hence, considering the stopping criterion $-s_{\ell}^* \leq \gamma \|{w_\ell}-{u}\|^2$, we conclude that any output of the FW algorithm ${w} \in \mathcal{C}$ is a feasible inexact projection onto $\mathcal{C}$ of the point $v \in \mathbb{R}^n$ with respect to $u \in \mathbb{R}^n$ and a relative error tolerance function $\gamma \|{w_\ell}-{u}\|^2$, i.e., 
\begin{equation*}
\langle u-{w}, ~y-{w}\rangle \leq \gamma \|{w_\ell}-{u}\|^2 \quad \forall~y\in \mathcal{C}.
\end{equation*}

In the subsequent sections, we present numerical experiments related to Algorithms~\ref{Alg:ExtraGradMethod} and~\ref{Alg:ExtraGradMethodLS}. Consequently, it is imperative to provide nuanced considerations relevant to our numerical results. Firstly, it is crucial to acknowledge the prominent standing of the Extragradient Method in the Variational Inequality literature, characterized by numerous well-established and widely recognized variants known for their efficacy in solving such problems, as evidenced in \cite{Malitsky2018, MalitskySemenov2015, MalitskySemenov2014, SolodovTseng1996}.  Our proposed method introduces an additional paradigm within this domain, explicitly tailored for a well-defined class of problems—specifically, instances where computing an inexact projection proves more efficient than obtaining an exact projection. Our study does not aim to claim universal superiority for our method but rather to offer a specialized solution for the addressed problem class. It intends to demonstrate that, within the realm of projection methods for variational inequality problems utilizing exact projection, there may be advantages in employing inexact projections.  Consequently, our research represents a preliminary step toward the development of inexact projection methods for variational inequality problems based on more sophisticated projection techniques, such as those discussed in \cite{CensorGibaliReich2011, CruzIusem2009, Malitsky2018, Malitsky2020, MalitskySemenov2015, MalitskySemenov2014, SolodovTseng1996, Tseng2000}.  In light of these considerations, it is essential to emphasize that our presentation of numerical experiments serves as an illustrative analysis rather than an exhaustive comparison with all potential variants of the Extragradient Method. Therefore, our focus is solely on the classic Extragradient Method, serving as a benchmark reference.

 \subsection{Lipschitz operator}
 
 In this section, we demonstrate the convergence of Algorithm~\ref{Alg:ExtraGradMethod} for a Lipschitz continuous operator. To do this, we consider the constrained set ${\cal C} = B^2_{10}\subset \R^2$, the unit ball in the $10$-norm as in \eqref{eq:np}, and the operator
    \[
      {\hat T}(x) = \begin{bmatrix}-1&-1\\1&-1\end{bmatrix}x + \begin{bmatrix}3/2\\1/2\end{bmatrix}
    \]
    We applied Algorithm~\ref{Alg:ExtraGradMethod} on VIP(${\hat T}$,${\cal C}$). The iterates are depicted in Figure~\ref{fig:algonols}.

    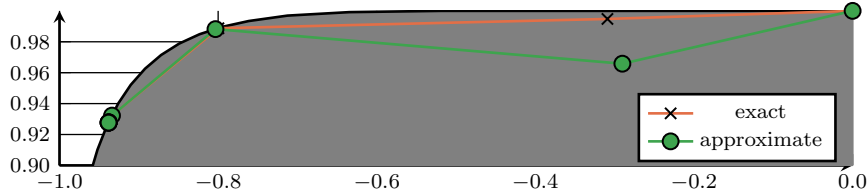
\begin{figure}[H]
      \begin{center}


      \begin{tikzpicture}[/tikz/background rectangle/.style={fill={rgb,1:red,1.0;green,1.0;blue,1.0}, fill opacity={1.0}, draw opacity={1.0}}, show background rectangle]
      \begin{axis}[point meta max={nan}, point meta min={nan}, legend cell align={left}, legend columns={1}, title={Results for exact and inexact projections for $\alpha$=0.31}, title style={at={{(0.5,1)}}, anchor={south}, font={{\fontsize{14 pt}{18.2 pt}\selectfont}}, color={rgb,1:red,0.0;green,0.0;blue,0.0}, draw opacity={1.0}, rotate={0.0}, align={center}}, legend style={color={rgb,1:red,0.0;green,0.0;blue,0.0}, draw opacity={1.0}, line width={1}, solid, fill={rgb,1:red,1.0;green,1.0;blue,1.0}, fill opacity={1.0}, text opacity={1.0}, font={{\fontsize{8 pt}{10.4 pt}\selectfont}}, text={rgb,1:red,0.0;green,0.0;blue,0.0}, cells={anchor={center}}, at={(0.98, 0.02)}, anchor={south east}}, axis background/.style={fill={rgb,1:red,1.0;green,1.0;blue,1.0}, opacity={1.0}}, anchor={north west}, xshift={1.0mm}, yshift={-1.0mm}, scaled x ticks={false}, xlabel={}, x tick style={color={rgb,1:red,0.0;green,0.0;blue,0.0}, opacity={1.0}}, x tick label style={color={rgb,1:red,0.0;green,0.0;blue,0.0}, opacity={1.0}, rotate={0}}, xlabel style={at={(ticklabel cs:0.5)}, anchor=near ticklabel, at={{(ticklabel cs:0.5)}}, anchor={near ticklabel}, font={{\fontsize{11 pt}{14.3 pt}\selectfont}}, color={rgb,1:red,0.0;green,0.0;blue,0.0}, draw opacity={1.0}, rotate={0.0}}, xmajorgrids={true}, xmin={-1}, xmax={0}, xticklabels={{$-1.0$,$-0.8$,$-0.6$,$-0.4$,$-0.2$,$0.0$}}, xtick={{-1.0,-0.8,-0.6000000000000001,-0.4,-0.2,0.0}}, xtick align={inside}, xticklabel style={font={{\fontsize{8 pt}{10.4 pt}\selectfont}}, color={rgb,1:red,0.0;green,0.0;blue,0.0}, draw opacity={1.0}, rotate={0.0}}, x grid style={color={rgb,1:red,0.0;green,0.0;blue,0.0}, draw opacity={0.1}, line width={0.5}, solid}, axis x line*={left}, x axis line style={color={rgb,1:red,0.0;green,0.0;blue,0.0}, draw opacity={1.0}, line width={1}, solid}, scaled y ticks={false}, ylabel={}, y tick style={color={rgb,1:red,0.0;green,0.0;blue,0.0}, opacity={1.0}}, y tick label style={color={rgb,1:red,0.0;green,0.0;blue,0.0}, opacity={1.0}, rotate={0}}, ylabel style={at={(ticklabel cs:0.5)}, anchor=near ticklabel, at={{(ticklabel cs:0.5)}}, anchor={near ticklabel}, font={{\fontsize{11 pt}{14.3 pt}\selectfont}}, color={rgb,1:red,0.0;green,0.0;blue,0.0}, draw opacity={1.0}, rotate={0.0}}, ymajorgrids={true}, ymin={0.9}, ymax={1.0}, yticklabels={{$0.90$,$0.92$,$0.94$,$0.96$,$0.98$}}, ytick={{0.9000000000000001,0.9200000000000002,0.9400000000000002,0.9600000000000002,0.9800000000000002}}, ytick align={inside}, yticklabel style={font={{\fontsize{8 pt}{10.4 pt}\selectfont}}, color={rgb,1:red,0.0;green,0.0;blue,0.0}, draw opacity={1.0}, rotate={0.0}}, y grid style={color={rgb,1:red,0.0;green,0.0;blue,0.0}, draw opacity={0.1}, line width={0.5}, solid}, axis y line*={left}, y axis line style={color={rgb,1:red,0.0;green,0.0;blue,0.0}, draw opacity={1.0}, line width={1}, solid}, colorbar={false}]
          \addplot[color={rgb,1:red,0.0;green,0.0;blue,0.0}, name path={b55856b5-374f-45e9-9ce8-6730891b2575}, area legend, fill={rgb,1:red,0.502;green,0.502;blue,0.502}, fill opacity={0.2}, draw opacity={0.2}, line width={1}, solid, forget plot]
              table[row sep={\\}]
              {
                  \\
                  -1.0  -0.5185495618128161  \\
                  -0.9991867102753419  -0.6345680004017088  \\
                  -0.9969353963973346  -0.7125598220810812  \\
                  -0.9934286234961162  -0.7614187286643554  \\
                  -0.9897166686552987  -0.794396404677748  \\
                  -0.9849005129062742  -0.8233633057575335  \\
                  -0.9788608439055726  -0.8489994717399421  \\
                  -0.9714675187196411  -0.8718230439574342  \\
                  -0.9625934194250519  -0.8921441506451215  \\
                  -0.9521458470341183  -0.9101564093558234  \\
                  -0.940008176821787  -0.926057806251828  \\
                  -0.926057806251828  -0.940008176821787  \\
                  -0.9101564093558234  -0.9521458470341183  \\
                  -0.8921441506451215  -0.9625934194250519  \\
                  -0.8718230439574342  -0.9714675187196411  \\
                  -0.8489994717399421  -0.9788608439055726  \\
                  -0.8233633057575335  -0.9849005129062742  \\
                  -0.794396404677748  -0.9897166686552987  \\
                  -0.7614187286643554  -0.9934286234961162  \\
                  -0.7125598220810812  -0.9969353963973346  \\
                  -0.6345680004017088  -0.9991867102753419  \\
                  -0.5185495618128161  -1.0  \\
                  0.5185495618128161  -1.0  \\
                  0.6345680004017088  -0.9991867102753419  \\
                  0.7125598220810812  -0.9969353963973346  \\
                  0.7614187286643554  -0.9934286234961162  \\
                  0.794396404677748  -0.9897166686552987  \\
                  0.8233633057575335  -0.9849005129062742  \\
                  0.8489994717399421  -0.9788608439055726  \\
                  0.8718230439574342  -0.9714675187196411  \\
                  0.8921441506451215  -0.9625934194250519  \\
                  0.9101564093558234  -0.9521458470341183  \\
                  0.926057806251828  -0.940008176821787  \\
                  0.940008176821787  -0.926057806251828  \\
                  0.9521458470341183  -0.9101564093558234  \\
                  0.9625934194250519  -0.8921441506451215  \\
                  0.9714675187196411  -0.8718230439574342  \\
                  0.9788608439055726  -0.8489994717399421  \\
                  0.9849005129062742  -0.8233633057575335  \\
                  0.9897166686552987  -0.794396404677748  \\
                  0.9934286234961162  -0.7614187286643554  \\
                  0.9969353963973346  -0.7125598220810812  \\
                  0.9991867102753419  -0.6345680004017088  \\
                  1.0  -0.5185495618128161  \\
                  1.0  0.5185495618128161  \\
                  0.9991867102753419  0.6345680004017088  \\
                  0.9969353963973346  0.7125598220810812  \\
                  0.9934286234961162  0.7614187286643554  \\
                  0.9897166686552987  0.794396404677748  \\
                  0.9849005129062742  0.8233633057575335  \\
                  0.9788608439055726  0.8489994717399421  \\
                  0.9714675187196411  0.8718230439574342  \\
                  0.9625934194250519  0.8921441506451215  \\
                  0.9521458470341183  0.9101564093558234  \\
                  0.940008176821787  0.926057806251828  \\
                  0.926057806251828  0.940008176821787  \\
                  0.9101564093558234  0.9521458470341183  \\
                  0.8921441506451215  0.9625934194250519  \\
                  0.8718230439574342  0.9714675187196411  \\
                  0.8489994717399421  0.9788608439055726  \\
                  0.8233633057575335  0.9849005129062742  \\
                  0.794396404677748  0.9897166686552987  \\
                  0.7614187286643554  0.9934286234961162  \\
                  0.7125598220810812  0.9969353963973346  \\
                  0.6345680004017088  0.9991867102753419  \\
                  0.5185495618128161  1.0  \\
                  -0.5185495618128161  1.0  \\
                  -0.6345680004017088  0.9991867102753419  \\
                  -0.7125598220810812  0.9969353963973346  \\
                  -0.7614187286643554  0.9934286234961162  \\
                  -0.794396404677748  0.9897166686552987  \\
                  -0.8233633057575335  0.9849005129062742  \\
                  -0.8489994717399421  0.9788608439055726  \\
                  -0.8718230439574342  0.9714675187196411  \\
                  -0.8921441506451215  0.9625934194250519  \\
                  -0.9101564093558234  0.9521458470341183  \\
                  -0.926057806251828  0.940008176821787  \\
                  -0.940008176821787  0.926057806251828  \\
                  -0.9521458470341183  0.9101564093558234  \\
                  -0.9625934194250519  0.8921441506451215  \\
                  -0.9714675187196411  0.8718230439574342  \\
                  -0.9788608439055726  0.8489994717399421  \\
                  -0.9849005129062742  0.8233633057575335  \\
                  -0.9897166686552987  0.794396404677748  \\
                  -0.9934286234961162  0.7614187286643554  \\
                  -0.9969353963973346  0.7125598220810812  \\
                  -0.9991867102753419  0.6345680004017088  \\
                  -1.0  0.5185495618128161  \\
                  -1.0  -0.5185495618128161  \\
              }
              ;
          \addplot[color={rgb,1:red,0.8889;green,0.4356;blue,0.2781}, name path={bef6139f-2705-4ecd-866a-c561e9337812}, draw opacity={1.0}, line width={1}, solid, mark={x}, mark size={3.0 pt}, mark repeat={1}, mark options={color={rgb,1:red,0.0;green,0.0;blue,0.0}, draw opacity={1.0}, fill={rgb,1:red,0.8889;green,0.4356;blue,0.2781}, fill opacity={1.0}, line width={0.75}, rotate={0}, solid}]
              table[row sep={\\}]
              {
                  \\
                  0.0  1.0  \\
                  -0.30915216912587085  0.9947756756580887  \\
                  -0.7996403735488538  0.9887588297908424  \\
                  -0.9336714457052196  0.9323905810637686  \\
                  -0.9378352707060426  0.9279973908494805  \\
                  -0.9380977711031966  0.9277079867358344  \\
                  -0.9381168417516134  0.9276869013823932  \\
                  -0.9381182646921065  0.9276853277887518  \\
                  -0.9381183827576298  0.9276851972210528  \\
              }
              ;
          \addlegendentry {exact}
          \addplot[color={rgb,1:red,0.2422;green,0.6433;blue,0.3044}, name path={a0cf5b5d-fcbd-40b1-8170-3e8f3d21ec22}, draw opacity={1.0}, line width={1}, solid, mark={*}, mark size={3.0 pt}, mark repeat={1}, mark options={color={rgb,1:red,0.0;green,0.0;blue,0.0}, draw opacity={1.0}, fill={rgb,1:red,0.2422;green,0.6433;blue,0.3044}, fill opacity={1.0}, line width={0.75}, rotate={0}, solid}]
              table[row sep={\\}]
              {
                  \\
                  0.0  1.0  \\
                  -0.2903182259918812  0.9658163821742148  \\
                  -0.8033638612527966  0.9881946446993284  \\
                  -0.9338224178738042  0.9322375077289164  \\
                  -0.937850132923969  0.9279810465136427  \\
                  -0.9380993345626893  0.927706258413234  \\
                  -0.9381168839030448  0.9276868547688529  \\
                  -0.9381182691467759  0.927685322862375  \\
                  -0.9381183828620131  0.927685197105616  \\
              }
              ;
          \addlegendentry {approximate}
      \end{axis}
      \end{tikzpicture}
      \caption{Convergence of Algorithm~\ref{Alg:ExtraGradMethod} with exact and inexact projections. On this problem the number of steps is the same in both cases.}\label{fig:algonols}
      \end{center}
    \end{figure}

   Table~\ref{tab:resultslipschitz} shows how the Algorithm~\ref{Alg:ExtraGradMethod}  performed for various values of $\alpha$ and ${\bar \gamma}$. The third columns represent the number of steps taken by the extragradient algorithm to reach the solution, and the last column represents the total number of linear searches applied by the Frank-Wolfe method.  It is interesting to compare the case when  ${\bar \gamma}$ is small (say, ${\bar \gamma}=0.01$) to the cases when ${\bar \gamma}$ is larger: it can be seen that although the algorithm takes the same number of steps to reach the solution, when ${\bar \gamma}\approx 0$ (in the case when the projection is almost exact), significantly more Frank-Wolfe iterations were performed. In other words, since performing approximate projections does not increase the total number of steps in the extragradient method, it is beneficial to use them, as each step of this method requires less work.

    \begin{table}[ht]
      \centering
        \begin{tabular}{rrrr}
          \toprule
          {$\mathbf{\alpha}$} & {$\mathbf{{\bar \gamma}}$} & {N. steps} & {N. linear searches} \\\midrule
          0.01 & 0.01 & 109 & 1.0317e7 \\
          0.01 & 0.106 & 109 & 6.62431e6 \\
          0.01 & 0.49 & 109 & 6.57129e6 \\\midrule
          0.11 & 0.01 & 16 & 12997 \\
          0.11 & 0.106 & 16 & 1085 \\
          0.11 & 0.394 & 16 & 966 \\\midrule
          0.21 & 0.01 & 11 & 2444 \\
          0.21 & 0.106 & 11 & 237 \\
          0.21 & 0.394 & 11 & 239 \\\midrule
          0.31 & 0.01 & 9 & 935 \\
          0.31 & 0.106 & 9 & 129 \\
          0.31 & 0.298 & 9 & 126 \\\midrule
          0.41 & 0.01 & 9 & 476 \\
          0.41 & 0.106 & 9 & 113 \\\bottomrule
        \end{tabular}
      \caption{Behaviour of the algorithm for various values of $\alpha$ and ${\bar \gamma}$.}\label{tab:resultslipschitz}
    \end{table}

        \subsection{Non-Lipschitz operator}
        
        In this section we consider the constrained set ${\cal C} = B^2_{10}\subset \R^2$, the unit ball in the $10$-norm as in \eqref{eq:np}, and the operator ${\tilde T}(x_1,x_2) = -\frac{t(x_1,x_2)}{1+t(x_1,x_2)}(1,1)$, where $t(x_1,x_2)=(x_1+\sqrt{x_1^2+4x_2})/2$. The operator ${\tilde T}$ is quasimonotone, and it is pseudomonotone with respect to the solution set, as it has a unique solution $(1,1)/\|(1,1))\|_{10}$, see \cite{buramillan}. However it is not Lipschitz. We applied Algorithm~\ref{Alg:ExtraGradMethodLS} on VIP(${\tilde T}$,${\cal C}$), with initial point at $(0,1)$. The iterates are depicted in Figure~\ref{fig:algols}.

    \begin{figure}[H]
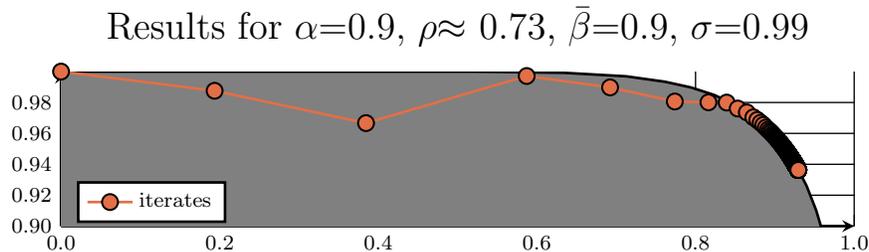

      \begin{center}



        \caption{Convergence of Algorithm~\ref{Alg:ExtraGradMethodLS}.}\label{fig:algols}
        \end{center}
    \end{figure}

    \subsection{Higher dimensions}
    In this section, we apply Algorithm~\ref{Alg:ExtraGradMethod} to a problem in $d$-dimensional Euclidean space $\R^d$, which is a modified version of~\cite[Example 5.3]{buramillan}. Consider the unit norm ball in $p$-norm $B^d_{p}$,  as defined in \eqref{eq:np},  and let   $T_{h}(x) := (T_{h,1}(x),…,T_{h,d}(x))$ be  an operators,   where
\[
T_{h,i}(x) := \frac{h x_i \sum_{j=1}^{d}x_j - \frac{1}{2}h\sum_{j=1}^{d}x_j^2 - 1}{(\sum_{i=1}^d x_i)^2},
\]
and $h$ is a value such that $0.1 \leq h \leq 1.6$.  The operator $T_h$ vanishes at the point $x=\alpha e$, where $\alpha=(2/(dh))^{0.5}$ and $e:=(1,1,\ldots,1)\in \R^d$. When this point is in the set $B^d_{p}$, then it is a solution to the problem $VIP(T,B^d_{p})$. Otherwise, when $h\geq 2d^{(\frac{2}{{p}}-1)}$, a solution is the Euclidean projection of  $x=\alpha e$ onto $B^d_{p}$, i.e.,  $x^* = {\PJ}_{B^d_{p}}(\alpha e)$.

In all experiments we use the starting point $x^1=(0,\ldots,0,1)$.

\subsubsection{In $\R^5$}

We first consider the convergence when $d=5$. In these experiments we set $h=0.6$. The convergence of the sequence of iterates is shown on Figure~\ref{fig:5dims}. It can be seen that the sequence converges rapidly towards the solution of the problem, which is $\nicefrac{e}{\|e\|_{{p}}}\in \R^5$.

\begin{figure}[H]
\begin{tikzpicture}[/tikz/background rectangle/.style={fill={rgb,1:red,1.0;green,1.0;blue,1.0}, fill opacity={1.0}, draw opacity={1.0}}, show background rectangle]
\begin{axis}[point meta max={nan}, point meta min={nan}, legend cell align={left}, legend columns={1}, title={Convergence of the sequence $x^{k}$}, title style={at={{(0.5,1)}}, anchor={south}, font={{\fontsize{14 pt}{18.2 pt}\selectfont}}, color={rgb,1:red,0.0;green,0.0;blue,0.0}, draw opacity={1.0}, rotate={0.0}, align={center}}, legend style={color={rgb,1:red,0.0;green,0.0;blue,0.0}, draw opacity={1.0}, line width={1}, solid, fill={rgb,1:red,1.0;green,1.0;blue,1.0}, fill opacity={1.0}, text opacity={1.0}, font={{\fontsize{8 pt}{10.4 pt}\selectfont}}, text={rgb,1:red,0.0;green,0.0;blue,0.0}, cells={anchor={center}}, at={(0.98, 0.98)}, anchor={north east}}, axis background/.style={fill={rgb,1:red,1.0;green,1.0;blue,1.0}, opacity={1.0}}, anchor={north west}, xshift={1.0mm}, yshift={-1.0mm},  height={0.5\figurewidth}, scaled x ticks={false}, xlabel={$k$}, x tick style={color={rgb,1:red,0.0;green,0.0;blue,0.0}, opacity={1.0}}, x tick label style={color={rgb,1:red,0.0;green,0.0;blue,0.0}, opacity={1.0}, rotate={0}}, xlabel style={at={(ticklabel cs:0.5)}, anchor=near ticklabel, at={{(ticklabel cs:0.5)}}, anchor={near ticklabel}, font={{\fontsize{11 pt}{14.3 pt}\selectfont}}, color={rgb,1:red,0.0;green,0.0;blue,0.0}, draw opacity={1.0}, rotate={0.0}}, xmajorgrids={true}, xmin={0.16000000000000014}, xmax={29.84}, xticklabels={{$5$,$10$,$15$,$20$,$25$}}, xtick={{5.0,10.0,15.0,20.0,25.0}}, xtick align={inside}, xticklabel style={font={{\fontsize{8 pt}{10.4 pt}\selectfont}}, color={rgb,1:red,0.0;green,0.0;blue,0.0}, draw opacity={1.0}, rotate={0.0}}, x grid style={color={rgb,1:red,0.0;green,0.0;blue,0.0}, draw opacity={0.1}, line width={0.5}, solid}, axis x line*={left}, x axis line style={color={rgb,1:red,0.0;green,0.0;blue,0.0}, draw opacity={1.0}, line width={1}, solid}, scaled y ticks={false}, ylabel={}, y tick style={color={rgb,1:red,0.0;green,0.0;blue,0.0}, opacity={1.0}}, y tick label style={color={rgb,1:red,0.0;green,0.0;blue,0.0}, opacity={1.0}, rotate={0}}, ylabel style={at={(ticklabel cs:0.5)}, anchor=near ticklabel, at={{(ticklabel cs:0.5)}}, anchor={near ticklabel}, font={{\fontsize{11 pt}{14.3 pt}\selectfont}}, color={rgb,1:red,0.0;green,0.0;blue,0.0}, draw opacity={1.0}, rotate={0.0}}, ymajorgrids={true}, ymin={-0.04744272052555276}, ymax={1.6925152743338832}, yticklabels={{$0.0$,$0.5$,$1.0$,$1.5$}}, ytick={{0.0,0.5,1.0,1.5}}, ytick align={inside}, yticklabel style={font={{\fontsize{8 pt}{10.4 pt}\selectfont}}, color={rgb,1:red,0.0;green,0.0;blue,0.0}, draw opacity={1.0}, rotate={0.0}}, y grid style={color={rgb,1:red,0.0;green,0.0;blue,0.0}, draw opacity={0.1}, line width={0.5}, solid}, axis y line*={left}, y axis line style={color={rgb,1:red,0.0;green,0.0;blue,0.0}, draw opacity={1.0}, line width={1}, solid}, colorbar={false}]
    \addplot[color={rgb,1:red,0.0;green,0.6056;blue,0.9787}, line width={1}, solid]
        table[row sep={\\}]
        {
            \\
            1.0  1.6432711801397482  \\
            2.0  1.5114712865582025  \\
            3.0  1.348907424075164  \\
            4.0  1.169262298979628  \\
            5.0  0.9639041135926758  \\
            6.0  0.7596562285979648  \\
            7.0  0.5652207539627455  \\
            8.0  0.44136699792001965  \\
            9.0  0.35464618813190535  \\
            10.0  0.2896768699912317  \\
            11.0  0.23916258141484723  \\
            12.0  0.19895513735933112  \\
            13.0  0.16643323120425793  \\
            14.0  0.1398209744968307  \\
            15.0  0.11785462997917928  \\
            16.0  0.09960167927955013  \\
            17.0  0.08435471967562556  \\
            18.0  0.07156542366669304  \\
            19.0  0.06080143323005738  \\
            20.0  0.05171710903338949  \\
            21.0  0.04403302868652776  \\
            22.0  0.0375212193517286  \\
            23.0  0.0319942702128082  \\
            24.0  0.02729714324645106  \\
            25.0  0.02330090622269261  \\
            26.0  0.019897864389267196  \\
            27.0  0.0169977291826715  \\
            28.0  0.01452456878475269  \\
            29.0  0.012414356996706177  \\
        }
        ;
    \addlegendentry {$\|x^{k}-x^*\|$}
    \addplot[color={rgb,1:red,0.8889;green,0.4356;blue,0.2781}, line width={1}, solid]
        table[row sep={\\}]
        {
            \\
            1.0  0.13180224807229451  \\
            2.0  0.16398697298285878  \\
            3.0  0.1853924257048514  \\
            4.0  0.2114681916949644  \\
            5.0  0.20568444727730648  \\
            6.0  0.19495849420097194  \\
            7.0  0.12410878707326718  \\
            8.0  0.08684884657616979  \\
            9.0  0.0650366398555692  \\
            10.0  0.05055104833048798  \\
            11.0  0.04022809187798979  \\
            12.0  0.03253375445520416  \\
            13.0  0.026619168340435935  \\
            14.0  0.0219704286638999  \\
            15.0  0.01825538891606851  \\
            16.0  0.015248427267326722  \\
            17.0  0.012790185404776507  \\
            18.0  0.010764532380046148  \\
            19.0  0.009084655928039708  \\
            20.0  0.007684284172251106  \\
            21.0  0.006511934966049534  \\
            22.0  0.005527026778422415  \\
            23.0  0.004697175053686203  \\
            24.0  0.003996266863154986  \\
            25.0  0.003403060378769406  \\
            26.0  0.0029001467480342215  \\
            27.0  0.0024731675887135003  \\
            28.0  0.002110216272520039  \\
            29.0  0.0018013736685821805  \\
        }
        ;
    \addlegendentry {$\|x^{k+1}-x^{k}\|$}

\end{axis}
\end{tikzpicture}
\caption{Convergence of the iterates of the proposed method.}\label{fig:5dims}
\end{figure}
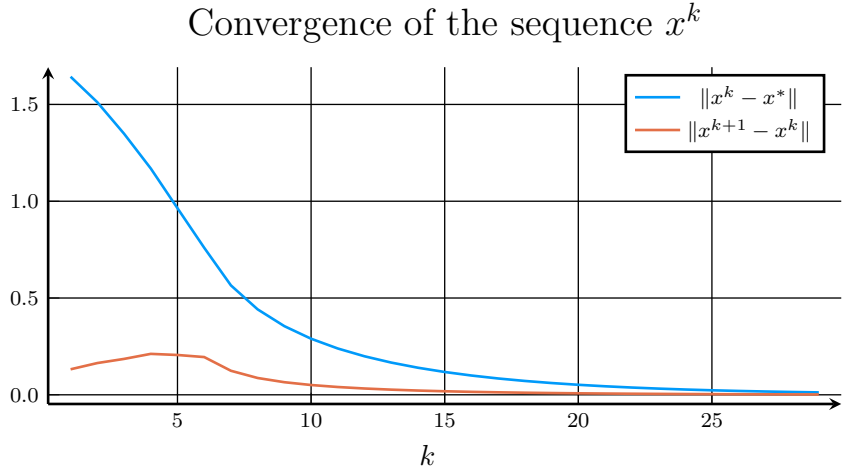

\subsubsection{Higher Dimensions}

We also observe the number of iterations required by our algorithm to reach an approximate solution. We vary  $d$ of the $d$-dimensional Euclidean space $\R^d$ and $p$ of the  ${p}$-norm. We set $h=0.2$ so that in most problems that we solve the solution to the VIP problem is not a zero of the operator $T_{h}$. The number of iterations, for various values of $d$ and ${p}$ is provided in Table~\ref{tab:highdim}.

\begin{table}[ht]
  \begin{tabular}{*{12}{r}}
    \toprule
     &$d$ & \textbf{5} & \textbf{6} & \textbf{7} & \textbf{8} & \textbf{9} & \textbf{10} & \textbf{11} & \textbf{12} & \textbf{13} & \textbf{14} \\
     $p$ &&&&&&&&&&& \\\midrule
    10    &         & 704        & 926    
    & 1083       & 1644       & 2045       & 2502        & 2783        & 3371        & 3578        & 3643        \\
    15     &       & 283        & 419        & 481        & 804        & 976        & 1227        & 1484        & 1564        & 1249        & 256         \\\bottomrule
  \end{tabular}

  \begin{tabular}{*{11}{r}}
    \toprule
     &$d$ & \textbf{15} & \textbf{16} & \textbf{17} & \textbf{18} & \textbf{19} & \textbf{20} & \textbf{25} & \textbf{50} & \textbf{100} \\
     $p$&&&&&&&&&&\\\midrule
                10 &&        3193 &        2291 &         863 &         308 &         316 &         325 &         364 &         519 &                  736 \\
                15 &&         281 &         290 &         299 &         308 &         317 &         325 &         365 &         518 &       735 \\\bottomrule
  \end{tabular}
  \caption{Number of iterations required to obtain an approximate solution (such that $\|x_k-x^*\|\leq 10^{-2}$).}
  \label{tab:highdim}
\end{table}
    
 \section{Conclusions} \label{Sec:Conclusions}
   We investigate the Extragradient method for solving variational inequality problems using inexact projections onto the feasible set in this paper. We expect that our study can contribute to further research on the subject, particularly in solving large-scale problems where the computing effort of each iteration is connected with projections into the feasible set. Indeed, the idea of employing inexactness in the projection rather than exactness is very attractive from a computational perspective. It is worth noting that the Frank-Wolfe approach has a low computing cost for each iteration, resulting in great computational performance in various types of compact sets, as cited in  \cite{GarberHazan2015, Jaggi2013}. Searching for new efficient methods, such as the Frank-Wolfe method which generates inexact projections, is a subject that calls for attention.

   \backmatter
   \bmhead{Acknowledgements}

 The first and last authors were supported by the Australian Research Council (ARC),  Solving hard Chebyshev approximation problems through nonsmooth analysis (Discovery Project DP180100602). The second author was supported in part by  CNPq grant 304666/2021-1.

  \section{Data availability and conflict of interest statement}

 We do not analyse or generate any datasets, because our work proceeds within a theoretical and mathematical approach. All code used to produce the results of the numerical experiments is available at \url{https://github.com/ugonj/extragradient}. The authors declare no conflict of interest.

\bibliographystyle{plain}
\bibliography{InexactExtraGradMethodVIP}
\end{document}